\newtheorem{theorem}{Theorem}[section]
\newtheorem{lemma}[theorem]{Lemma}
\newtheorem{proposition}[theorem]{Proposition}
\theoremstyle{definition}
\newtheorem{definition}[theorem]{Definition}
\newtheorem{remark}[theorem]{Remark}
\let\Im=\undefined\DeclareMathOperator{\Im}{Im}
\DeclareMathOperator{\sgn}{sgn}
\numberwithin{equation}{section}
\begin{document}

\title[dispersive equations with quasi-periodic initial data]{Local wellposedness of dispersive equations with quasi-periodic initial data}

\author[H.~Papenburg]{Hagen Papenburg}
\address{Department of Mathematics, University of California, Los Angeles, CA 90095, USA}
\email{papenburg@math.ucla.edu}

\begin{abstract}
We prove unconditional local well-posedness in a space of quasi-periodic functions for dispersive equations of the form $$\partial_tu + Lu + \partial_x(u^{p+1})=0,$$ where $L$ is a multiplier operator with purely imaginary symbol which grows at most exponentially. The class of equations to which our method applies includes the generalized Korteweg-de Vries equation, the generalized Benjamin-Ono equation, and the derivative nonlinear Schr\"odinger equation. We also discuss well-posedness of some dispersive models which do not have a problematic derivative in the nonlinearity, namely, the nonlinear Schr\"odinger equation and the generalized Benjamin-Bona-Mahony equation, with quasi-periodic initial data. In this way, we recover and improve upon results from \cite{damanik2015existence}, \cite{damanik2021local} and \cite{damanik2022quasiperiodic} by shorter arguments.

\end{abstract}
\maketitle

 \section{Introduction}
  
We investigate well-posedness of some well-studied dispersive equations such as the Korteweg-de Vries equation, the Benjamin-Ono equation, and the derivative nonlinear Schr\"odinger equation for quasi-periodic data.
A function $u:\mathbb{R} \rightarrow \mathbb{C}$ is called quasi-periodic if it takes the form
\begin{equation}\label{quasi-peridic}
u(x)=\sum_{n \in \mathbb{Z}^{\nu}}\widehat{u}(n)e^{i\langle \omega,n \rangle x}, \end{equation} where $\omega=(\omega_1,...,\omega_{\nu})$ is a frequency vector whose entries are linearly independent over the integers. The Fourier coefficients in a quasi-periodic series expansion are uniquely determined by the function; they can be recovered by 
  \begin{equation}\label{coefficientformula}
  \widehat{u}(n) = \lim_{N \rightarrow \infty} \frac{1}{2N} \int_{-N}^{N}u(x)e^{-i\langle \omega, n \rangle x}dx.	
  \end{equation}
  The sum of two periodic functions with incommensurable periods is not periodic, but it is quasi-periodic.
  
 Our main result is a local well-posedness theorem in a quasi-periodic function space for a wide family of dispersive equations. The arguments presented allow us to treat uniformly equations of the form \begin{equation}\label{multiplierequation}
 u_t + Lu + \partial_x(u^{p+1}) = 0	
 \end{equation}
 where $L$ is a Fourier multiplier operator with purely imaginary symbol satisfying a certain exponential growth bound, see \eqref{growthcondition}. Here $u$ is a function of space and time $(t,x) \in \mathbb{R} \times \mathbb{R}$ and $p$ is a positive integer. The generalized Korteweg-de Vries equation  \begin{equation}\tag{g-KdV}\label{g-KdV}\partial_t u + \partial_x^3 u + \partial_x(u^{p+1}) = 0 \end{equation} 
 and the generalized Benjamin-Ono equation 
 \begin{equation}\tag{g-BO}\label{gBO} \partial_tu+H\partial_x^2u+\partial_x(u^{p+1})=0 \end{equation}
 belong to this family of equations. In the g-BO equation, $H$ denotes the Hilbert transform --- the multiplier operator with symbol $-i\sgn(\xi)$, where  $\xi$ denotes the frequency variable. For \eqref{gBO} the operator $L=H\partial_x^2$ has symbol $m(\xi)=-i|\xi|\xi$, for \eqref{g-KdV} the operator $L=\partial_x^3$ has symbol $m(\xi)=-i\xi^3$. 

 The case $p=1$ in (g-KdV) is the classical KdV-equation named after Korteweg and de Vries, who derived this equation in 1895 as a model for the propagation of long waves in shallow water. They were motivated by the the question of whether or not wavefronts must necessarily steepen and sought to explain the observation of waves that keep their shape (now called solitons \cite{ApplicationsofKdV, KortewegdeVries}). The case $p=2$ is known as the modified KdV equation, which is related to KdV via the Miura map. The KdV equation has long attracted attention from mathematicians and physicists due to several remarkable features; in modern terminology, we have an infinite-dimensional, completely integrable Hamiltonian system \cite{Deift2019FiftyYO, PhysRevLett.19.1095, klein2022nonlinear, Lax1968IntegralsON, MiuraGardnerKruskal}. Well-posedness in Sobolev spaces on the real line and the torus  has been extensively studied; see, for example, 
\cite{bourgainglobal, MR1969209, MR3559154, KenigPonceVega2, killip2019kdv, klein2022nonlinear, MR2233925}. For results about (g-)KdV with initial data in spaces of analytic functions we refer to \cite{MR2172859, 10.57262/die/1356060724, inbook}. 
 
In this paper, we consider quasi-periodic initial data whose coefficients $\widehat{u}: \mathbb{Z}^{\nu} \rightarrow \mathbb{C}$ satisfy the decay condition 
\begin{equation}\label{decay-condition}
 \|u\|_{\omega,k}:=\|\widehat{u}(n)e^{k|n|}\|_{l^1(\mathbb{Z}^{\nu})}=\sum_{n \in \mathbb{Z}^{\nu}}|\widehat{u}(n)|e^{k|n|} < \infty
\end{equation} for fixed $k>0$. We define $V^{\omega,k}$ to be the space of all $\omega$-quasi-periodic functions of the form \eqref{quasi-peridic} for which the norm $\|\cdot\|_{\omega,k}$ is finite:
  
  $$V^{\omega,k} = \Big\{ u(x)=\sum_{n \in \mathbb{Z}^{\nu}}\widehat{u}(n)e^{i \langle \omega, n \rangle x} \ \Big| \ \|u\|_{\omega,k} < \infty \Big\}.$$
 Observe that functions in $V^{\omega,k}$ are analytic --- in fact, they admit a holomorphic extension to the strip $\{z \in \mathbb{C}:|\Im(z)|<|\omega|^{-1}k\}$. For $T>0$ we denote by $C([-T,T],V^{\omega,k})$ the space of continuous functions $u:[-T,T] \rightarrow V^{\omega,k}$ equipped with the norm $$||u||_{C_tV_x^{\omega,k}}=\sup_{t \in [-T,T]} ||u(x,t)||_{V_x^{\omega,k}}.$$ We can express a function $u \in C([-T,T],V^{\omega,k})$ and also more general spatially $\omega$-quasi-periodic functions as \begin{equation}\label{seriesexpansion}u(t,x)=\sum_{n \in \mathbb{Z}^{\nu}}\widehat{u}(t,n)e^{i\langle \omega, n \rangle x}\end{equation} with coefficients $\widehat{u}(t,n)$ given by \eqref{coefficientformula} for fixed time $t$, and will do so throughout.
  We denote by $V^{\omega,k}_{\mathbb{R}}$ the subspace of all real-valued functions in $V^{\omega,k}$, which amounts to imposing the additional condition $\widehat{u}(-n)=\overline{\widehat{u}(n)}$ on the coefficients. The space $C([-T,T],V_{\mathbb{R}}^{\omega,k})$ is defined accordingly. For a quasi-periodic function of the form \eqref{quasi-peridic} the multiplier $L$ with symbol $m$ is defined by \begin{equation}
      Lu(x) = \sum_{n \in \mathbb{Z}^{\nu}} m(\langle \omega, n \rangle)\widehat{u}(n)e^{i\langle \omega, n \rangle x}.
  \end{equation}
  
In addition to being purely imaginary, we require the symbol of the multiplier operator $L$ to satisfy the exponential growth bound
 \begin{equation}\label{growthcondition}
 	\sup_{\xi}|m(\xi)|e^{-\frac{k - \varepsilon}{|\omega|}|\xi|} < \infty 
 \end{equation} for some $\epsilon>0$. This ensures that $L$ maps $V^{\omega,k}$ into $V^{\omega,\varepsilon}$, guaranteeing that $Lu$ is smooth in the spatial variable. Requiring that \begin{equation}\label{symmetrycondition} m(-\xi)=\overline{m(\xi)}\end{equation} ensures that $L$ maps real-valued functions to real-valued functions. We say that $u \in C([-T,T],V^{\omega,k})$ [or $u \in C([-T,T],V_{\mathbb{R}}^{\omega,k})$] is a classical solution to the equation \eqref{multiplierequation} if and only if $u$ is at least once continuously differentiable in the time variable and solves the equation pointwise. We are now ready to state the following result:
 
   \begin{theorem}\label{multipliertheoremreal}
  	Let $L$ be a Fourier multiplier operator with purely imaginary symbol satisfying the growth bound \eqref{growthcondition} and the symmetry condition \eqref{symmetrycondition}, and let $k>\kappa>0$. For initial data in $V_{\mathbb{R}}^{\omega,k}$, the Cauchy problem
  \begin{equation}\label{multiplierCauchyproblem}
  	\partial_t u + Lu + \partial_x(u^{p+1}) = 0, \quad u(0,x)=u_0(x)
  \end{equation}
  is unconditionally locally well-posed in $V_{\mathbb{R}}^{\omega,\kappa}$ both forward and backward in time.
  \end{theorem}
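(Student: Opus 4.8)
The plan is to pass to Fourier coefficients and exploit that the purely imaginary symbol makes the linear flow an isometry, so that the only genuine difficulty is the derivative in the nonlinearity. Writing $u(t,x)=\sum_n \widehat u(t,n)e^{i\langle\omega,n\rangle x}$, the equation becomes the infinite ODE system $\partial_t\widehat u(t,n)=-m(\langle\omega,n\rangle)\widehat u(t,n)-i\langle\omega,n\rangle\,\widehat{u^{p+1}}(t,n)$. Since $m$ is purely imaginary, $\Re\bigl(\overline{\widehat u}\,m\widehat u\bigr)=0$, so the linear term does not change the modulus of any coefficient and $\bigl|\tfrac{d}{dt}|\widehat u(t,n)|\bigr|\le|\omega|\,|n|\,|\widehat{u^{p+1}}(t,n)|$. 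Equivalently, $e^{-tL}$ is an isometry on every $V^{\omega,s}$ and one works with the Duhamel formulation $u(t)=e^{-tL}u_0-\int_0^t e^{-(t-s)L}\partial_x(u^{p+1})(s)\,ds$; the growth bound \eqref{growthcondition} guarantees that $Lu$ is an honest analytic function, so a mild solution of this integral equation is a classical solution. The one structural fact I would record first is that the weighted space is a Banach algebra, $\|uv\|_{\omega,s}\le\|u\|_{\omega,s}\|v\|_{\omega,s}$, which is immediate from $|n|\le|m|+|n-m|$ and the convolution formula for Fourier coefficients.

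The heart of the argument is a time-dependent analyticity radius. Setting $r(t)=k-\beta|t|$ and $E(t)=\|u(t)\|_{\omega,r(t)}=\sum_n|\widehat u(t,n)|e^{r(t)|n|}$, I would differentiate in $t$ and use the modulus bound above together with $|n|=|\sum_i n_i|\le\sum_i|n_i|$ inside the convolution defining $\widehat{u^{p+1}}$. This yields $\frac{d}{dt}E(t)\le\bigl(|\omega|(p+1)E(t)^p-\beta\bigr)\sum_n|n|\,|\widehat u(t,n)|e^{r(t)|n|}$, the crucial point being that the derivative loss in the nonlinearity and the gain $\frac{d}{dt}e^{r(t)|n|}=-\beta|n|e^{r(t)|n|}$ coming from the shrinking radius both scale like $|n|$, so they may be balanced frequency by frequency. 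Choosing $\beta=|\omega|(p+1)\|u_0\|_{\omega,k}^p$ makes the bracket nonpositive as long as $E(t)\le\|u_0\|_{\omega,k}$, and a bootstrap closes this to give the a priori bound $E(t)\le\|u_0\|_{\omega,k}$ on the interval $|t|\le T:=(k-\kappa)/\beta$, on which $r(t)\ge\kappa$. In particular $\|u(t)\|_{\omega,\kappa}\le\|u_0\|_{\omega,k}$ throughout.

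For existence I would turn this a priori estimate into a construction by frequency truncation. Projecting onto modes $|n|\le R$ gives a finite-dimensional ODE with polynomial nonlinearity, hence a smooth local solution $u_R$; crucially the estimate above survives truncation verbatim, because the projection only discards nonnegative terms from the $\ell^1$ sums, so the bound $\|u_R(t)\|_{\omega,r(t)}\le\|u_0\|_{\omega,k}$ holds uniformly in $R$ on $[-T,T]$. The uniform exponential decay $|\widehat{u_R}(t,n)|\le\|u_0\|_{\omega,k}e^{-r(t)|n|}$ lets me extract a coefficientwise limit and upgrade it, by dominated convergence, to a limit in $C([-T,T],V^{\omega,\kappa})$ solving the integral equation; the symmetry condition \eqref{symmetrycondition} and the symmetry of the truncation keep the solution real-valued.

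Uniqueness---and with it the ``unconditional'' claim---follows from the same mechanism applied to the difference $w=u-v$ of two solutions in $C([-T,T],V^{\omega,\kappa})$ with the same data. Using $u^{p+1}-v^{p+1}=w\sum_{j=0}^{p}u^jv^{p-j}$ and a radius $\rho(t)=\kappa-\gamma|t|$ decreasing from $\kappa$, the identical computation gives $\frac{d}{dt}\|w(t)\|_{\omega,\rho(t)}\le(|\omega|(p+1)M^p-\gamma)\sum_n|n|\,|\widehat w(t,n)|e^{\rho(t)|n|}$, where $M$ bounds $\sup_t\|u\|_{\omega,\kappa}$ and $\sup_t\|v\|_{\omega,\kappa}$. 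Taking $\gamma=|\omega|(p+1)M^p$ forces $\|w(t)\|_{\omega,\rho(t)}$ to be nonincreasing, hence identically zero for $|t|<\kappa/\gamma$; iterating over consecutive short intervals covers $[-T,T]$, and a nearly identical estimate on $w$ with nonzero initial difference gives Lipschitz continuous dependence on the data. The main obstacle throughout is that $\partial_x$ costs one spatial derivative while the purely imaginary symbol provides no smoothing to recover it; the entire proof hinges on the observation that this loss can be paid for by a linearly shrinking analyticity radius precisely because loss and gain have the same $|n|$-scaling, which is what makes the naive but divergent contraction-mapping bookkeeping unnecessary.
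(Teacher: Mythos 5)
Your route is genuinely different from the paper's: where the paper sets up a contraction mapping in the spaces $X^{k,\gamma,T}$ with norm $\sum_n\sup_t|\widehat u(t,n)|e^{(k-\gamma|t|)|n|}$ and absorbs the derivative via the integral inequality $\bigl|\int_0^t|n|e^{-\gamma(|t|-|\tau|)|n|}d\tau\bigr|\le\gamma^{-1}$, you run a Cauchy--Kovalevskaya-style differential inequality for $E(t)=\|u(t)\|_{\omega,k-\beta|t|}$ and construct solutions by Galerkin truncation. The underlying mechanism (a linearly shrinking analyticity radius paying for the single derivative, since loss and gain both scale like $|n|$) is the same, and your existence half is sound: in the a priori estimate all $p+1$ convolution factors are copies of $u$, so distributing $|n|\le|q_1|+\cdots+|q_{p+1}|$ gives exactly $(p+1)\bigl[\sum_q|q||\widehat u(q)|e^{r|q|}\bigr]E^p$, and the truncation/compactness step is routine.

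The gap is in the uniqueness (and Lipschitz-dependence) estimate. The inequality $\frac{d}{dt}\|w\|_{\omega,\rho(t)}\le\bigl(|\omega|(p+1)M^p-\gamma\bigr)\sum_n|n||\widehat w(t,n)|e^{\rho(t)|n|}$ is false as written: writing $u^{p+1}-v^{p+1}=w\,h$ with $h=\sum_ju^jv^{p-j}$, the factor $|n|\le|q|+|r|$ in the convolution $\widehat{wh}(n)=\sum_{q+r=n}\widehat w(q)\widehat h(r)$ lands on \emph{either} factor, producing the cross term $\|w\|_{\omega,\rho}\cdot\sum_r|r||\widehat h(r)|e^{\rho(t)|r|}$, which is not dominated by $\sum_n|n||\widehat w(n)|e^{\rho|n|}$ (take $w$ constant to see this). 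Worse, since you start the radius at exactly $\kappa$ and the solutions are only assumed to lie in $C([-T,T],V^{\omega,\kappa})$, the quantity $\sum_r|r||\widehat u(r)|e^{\rho(t)|r|}$ may be infinite at $t=0$ and blows up like $1/(\gamma|t|)$ as $t\to0$, so the resulting Gr\"onwall factor does not obviously close against $w(0)=0$ without a rate. The repair is precisely the paper's $\varepsilon$-sacrifice (Lemma \ref{embedding} and the choice of $X^{\kappa-\varepsilon,\gamma,T}$): start the shrinking radius at $\kappa-\varepsilon$ instead of $\kappa$, so that $\sum_r|r||\widehat u(r)|e^{(\kappa-\varepsilon)|r|}\le\sup_r\bigl(|r|e^{-\varepsilon|r|}\bigr)\|u\|_{\omega,\kappa}$ is finite uniformly in $t$; the cross term then contributes a harmless $C\|w\|_{\omega,\rho(t)}$ to the differential inequality, $\gamma$ large kills the term carrying $\sum_n|n||\widehat w|e^{\rho|n|}$, and Gr\"onwall gives $w\equiv0$ (and Lipschitz dependence when $w(0)\ne0$). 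With that correction your argument goes through; note this is also the one place where the paper's integral formulation is structurally cleaner, since \eqref{keystep} absorbs the full factor $|n|$ before any product estimate is applied and no distribution of the weight over convolution factors is ever needed.
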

  
  The notion of well-posedness here and throughout the paper is as follows: for any $u_0 \in V^{\omega,k}_{\mathbb{R}}$ there exists an open ball $B \subseteq V^{\omega,k}_{\mathbb{R}}$ containing $u_0$ and a $T>0$ such that for any initial data $\tilde{u}_0 \in B$ there exists a unique classical solution in $C([-T,T], V^{\omega,\kappa}_{\mathbb{R}})$ and the data-to-solution map $B \rightarrow C([-T,T],V^{\omega,\kappa}_{\mathbb{R}})$ is Lipschitz continuous. The time of existence can in fact be taken to depend only on the size of the initial data, that is, there exists a time $T:=T(R)$ that works uniformly for all initial data with $\|u_0\|_{V^{\omega,k}_{\mathbb{R}}} < R$.   
  
  The \eqref{g-KdV} and \eqref{gBO} equations, which belong to the class of equations to which this theorem applies, are traditionally studied for real-valued functions. That is the reason why we stated the above theorem using the space $V_{\mathbb{R}}^{\omega,\kappa}$ of real-valued functions in $V^{\omega,\kappa}$. However, the restriction to real-valued functions is insubstantial in the proof, and we could formulate an analogous well-posedness result in the space $V^{\omega, \kappa}$. In that setting the symmetry condition \eqref{symmetrycondition} on the symbol can be dropped. We may also replace the nonlinearity $\partial_x(u^{p+1})$ by $\partial_x(P(u,\bar{u}))$ with $P$ being a polynomial of two variables and formulate the following well-posedness result, which applies in particular to the derivative nonlinear Schr\"odinger equation
 \begin{equation}\tag{d-NLS}\label{dNLS}
 \partial_t u + i\partial_x^2u +\partial_x(|u|^{2}u)=0.	
 \end{equation}
  
  \begin{theorem}\label{multipliertheorem}
  	Let $L$ be a multiplier operator with purely imaginary symbol satisfying the growth bound \eqref{growthcondition}, let $P$ be a polynomial of two variables, and let $k>\kappa>0$. For initial data in $V^{\omega,k}$, the Cauchy problem
  \begin{equation}\label{multiplierCauchyproblem2}
  	\partial_t u + Lu + \partial_x(P(u,\bar{u})) = 0, \quad u(0,x)=u_0(x)
  \end{equation}
  is unconditionally locally well-posed in $V^{\omega,\kappa}$ both forward and backward in time. \end{theorem}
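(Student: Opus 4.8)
The plan is to recast \eqref{multiplierCauchyproblem2} in Duhamel form and solve it by a contraction argument run across the entire scale of spaces $\{V^{\omega,\rho}\}_{0<\rho\le k}$, spending the gap $k-\kappa$ to pay for the derivative in the nonlinearity. Writing $e^{-tL}$ for the Fourier multiplier sending $\widehat u(n)$ to $e^{-tm(\langle\omega,n\rangle)}\widehat u(n)$, the crucial structural fact is that since $m$ is purely imaginary each factor $e^{-tm(\langle\omega,n\rangle)}$ is unimodular, so $e^{-tL}$ is an isometric group on every $V^{\omega,\rho}$, strongly continuous in $t$ by dominated convergence. I would then seek a solution as a fixed point of
\begin{equation}
\Phi(u)(t)=e^{-tL}u_0-\int_0^t e^{-(t-s)L}\,\partial_x\big(P(u(s),\overline{u(s)})\big)\,ds.
\end{equation}
The point of passing to this formulation is that it eliminates the linear term $Lu$, which under \eqref{growthcondition} loses a full $k-\varepsilon$ of analyticity radius and hence cannot be treated perturbatively, replacing it by the harmless isometry $e^{-(t-s)L}$; only the genuinely one-derivative-losing term $\partial_x(P(u,\bar u))$ then survives for the analytic iteration.

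The argument rests on three elementary estimates. First, each $V^{\omega,\rho}$ is a Banach algebra, $\|uv\|_{\omega,\rho}\le\|u\|_{\omega,\rho}\|v\|_{\omega,\rho}$, because the weight $e^{\rho|n|}$ is submultiplicative under the convolution coming from multiplication; moreover conjugation is an isometry, as it sends $\widehat u(n)$ to $\overline{\widehat u(-n)}$. Consequently $P(u,\bar u)$ stays in $V^{\omega,\rho}$ and, factoring the polynomial difference, $\|P(u,\bar u)-P(v,\bar v)\|_{\omega,\rho}\le C_P(M)\,\|u-v\|_{\omega,\rho}$ on the ball $\|u\|_{\omega,\rho},\|v\|_{\omega,\rho}\le M$. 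Second, for $\rho'<\rho$ the derivative obeys $\|\partial_x v\|_{\omega,\rho'}\le \frac{|\omega|}{e(\rho-\rho')}\|v\|_{\omega,\rho}$, since $\sup_n|\langle\omega,n\rangle|e^{(\rho'-\rho)|n|}\le |\omega|/(e(\rho-\rho'))$; this is the single derivative loss, quantified by the reciprocal of the radius gap. Third, $e^{-(t-s)L}$ preserves $\|\cdot\|_{\omega,\rho}$ exactly. Composing these yields the fundamental nonlinear bound $\|\partial_x(P(u,\bar u)-P(v,\bar v))\|_{\omega,\rho'}\le \frac{C_P(M)}{\rho-\rho'}\|u-v\|_{\omega,\rho}$.

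To close the iteration despite this loss I would let the analyticity radius decrease linearly in time: fixing a rate $\lambda>0$ and $T=(k-\kappa)/\lambda$, I would work in the Ovsyannikov-type norm $\|u\|_*:=\sup_{|t|\le T}\sup_{0\le\rho<k-\lambda|t|}(k-\lambda|t|-\rho)\,\|u(t)\|_{\omega,\rho}$. The isometry of $e^{-tL}$ keeps the free term $e^{-tL}u_0$ at radius $k$, so $\|e^{-tL}u_0\|_*\le k\|u_0\|_{\omega,k}$, while the fundamental bound together with the integration of $(k-\lambda s-\rho)^{-1}$ in $s$ shows that $\Phi$ maps a ball of this space into itself and contracts there once $\lambda$ is chosen of order $C_P(R)$ with $R\sim\|u_0\|_{\omega,k}$. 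This is precisely the abstract Cauchy--Kovalevskaya mechanism, and it forces the existence time to scale like $T(R)\sim (k-\kappa)/C_P(R)$, which is where the hypothesis $k>\kappa$ enters. Applying the same difference estimate to two data, with $e^{-tL}(u_0-\tilde u_0)$ contributing the data gap in $V^{\omega,\kappa}$, gives Lipschitz dependence of the solution map; since $t$ enters only through $|t|$, the construction is symmetric under $t\mapsto-t$, yielding the backward statement.

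Unconditional uniqueness, meaning in the single space $C([-T,T],V^{\omega,\kappa})$ with no auxiliary restriction, I would prove separately by a factorial iteration. If $u,v$ solve the problem with common data and $w=u-v$, the Duhamel identity and the fundamental bound give $\|w(t)\|_{\omega,\rho}\le \frac{C}{\sigma-\rho}\int_0^{|t|}\|w(s)\|_{\omega,\sigma}\,ds$ for $\rho<\sigma\le\kappa$; chaining this through $n$ equally spaced radii between $\kappa$ and a target $\rho_*<\kappa$ produces $\|w(t)\|_{\omega,\rho_*}\le \big(\tfrac{Cn}{\kappa-\rho_*}\big)^n\tfrac{|t|^n}{n!}\sup_{[-T,T]}\|w\|_{\omega,\kappa}$, whose right-hand side tends to $0$ as $n\to\infty$ once $|t|$ lies below a threshold proportional to $\kappa-\rho_*$; letting $\rho_*\uparrow\kappa$ and iterating over finitely many time steps forces $w\equiv0$. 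Finally, because the solution retains a strictly positive radius $\rho(t)>\kappa$, the series for $u$, for $Lu$ (well defined and smooth by \eqref{growthcondition}) and for $\partial_x(P(u,\bar u))$ all converge absolutely, so the Duhamel formula may be differentiated termwise, showing $u$ is $C^1$ in time and recovers \eqref{multiplierCauchyproblem2} pointwise and hence is classical. The main obstacle throughout is exactly the derivative in the nonlinearity: it cannot be absorbed in any fixed space, and the whole argument is built around spending the finite analyticity budget $k-\kappa$ to pay for it, after the isometric group $e^{-tL}$ has neutralized the exponentially large but purely imaginary linear part.
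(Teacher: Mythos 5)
Your overall strategy --- a Duhamel reformulation so that the isometric group $e^{-tL}$ neutralizes the exponentially large linear part, followed by a contraction in which the analyticity radius shrinks linearly in time to pay for the derivative in the nonlinearity --- is exactly the strategy of the paper. The paper implements it with the norm $\|u\|_{k,\gamma,T}=\sum_{n}\sup_{|t|\le T}|\widehat u(t,n)|e^{(k-\gamma|t|)|n|}$, which resolves the radius loss frequency by frequency via $\int_0^t|\langle\omega,n\rangle|\,e^{-\gamma(|t|-|\tau|)|n|}\,d\tau\le|\omega|/\gamma$, whereas you work at the level of the full norms $\|\cdot\|_{\omega,\rho}$ with the Ovsyannikov weight $(k-\lambda|t|-\rho)$. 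Your unconditional uniqueness argument (factorial iteration through $n$ intermediate radii between $\kappa$ and $\rho_*$, then finitely many time steps) is correct and genuinely different from the paper's, which instead uses the embedding $C([-T,T],V^{\omega,\kappa})\subseteq X^{\kappa-\varepsilon,\gamma,T}$ and an open--closed bootstrap on the coincidence set of two solutions.

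The existence step, however, has a genuine gap as written. A ball $\{\|u\|_*\le M\}$ of your norm only yields $\|u(t)\|_{\omega,\rho}\le M/(k-\lambda|t|-\rho)$, which is unbounded as $\rho\uparrow k-\lambda|t|$, so the local Lipschitz constant $C_P(\cdot)$ of the polynomial nonlinearity is not uniformly controlled on that ball. Quantitatively, for $\deg P=d\ge 2$ your fundamental bound gives, after optimizing the intermediate radius, $\|\partial_x P(u,\bar u)(s)\|_{\omega,\rho}\lesssim M^{d}(k-\lambda|s|-\rho)^{-(d+1)}$, and then $(k-\lambda|t|-\rho)\int_0^{|t|}(k-\lambda s-\rho)^{-(d+1)}\,ds\sim \lambda^{-1}(k-\lambda|t|-\rho)^{-(d-1)}$ blows up near the moving boundary no matter how large $\lambda$ is; the claimed self-map and contraction estimates therefore do not close in $\|\cdot\|_*$. (Note also that integrating $(k-\lambda s-\rho)^{-1}$ produces a logarithm rather than the claimed $1/\lambda$ gain; the gain comes from integrating the square, which is exactly where the spare power of the weight is consumed.) The repair is to keep the iterates in a set on which $\|u(t)\|_{\omega,\rho}$ is bounded uniformly in $\rho$ up to the boundary $\rho=k-\lambda|t|$ --- this is precisely what the paper's space $X^{k,\gamma,T}$ achieves, being a Banach algebra with constant $1$ whose unit ball embeds into a ball of $C([-T,T],V^{\omega,\kappa})$ once $k-\gamma T>\kappa$ --- or else to invoke the Nirenberg--Nishida abstract Cauchy--Kovalevskaya theorem as a black box, whose proof performs this bookkeeping by successive approximations rather than by a single weighted-norm contraction.
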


 In \cite{damanik2015existence}, the Cauchy problem for KdV was studied with quasi-periodic initial data of the form (\ref{quasi-peridic}) and coefficients satisfying an exponential decay condition \begin{equation}\label{decay}|\widehat{u}_0(n)| \lesssim e^{-k|n|}.\end{equation} 
  It is shown that locally in time, there exists a spatially quasi-periodic classical solution of the form \eqref{seriesexpansion} whose Fourier coefficients satisfy a decreased exponential decay bound \begin{equation}\label{decreaseddecay}|\widehat{u}(t,n)| \lesssim e^{-\frac{k}{2}|n|}\end{equation} uniformly for $t \in [0,T]$ for some $T>0$. 
  Further, we have uniqueness of solutions of the form  \eqref{seriesexpansion} satisfying an exponential decay estimate on coefficients $|\widehat{u}(t,n)| \lesssim e^{- \rho|n|}$ uniformly in $t \in [0,T]$ for some $\rho>0$ and small enough $T>0$ (see \cite[Theorem A]{damanik2015existence}). 
  
  The approach taken in \cite{damanik2015existence} is to examine the associated integral equation for the Fourier coefficients (see \eqref{integralequation}) and obtain estimates on the iterative application of an integral transformation. This allows the authors to execute a Picard iteration scheme and construct a solution. 
  In \cite{damanik2021local}, this approach is taken to obtain an analogous local existence and uniqueness result for the \textit{generalized} KdV equation with quasi-periodic initial data with exponentially decaying Fourier coefficients as in \eqref{decay}; see \cite[Theorem 0.1 and 0.2]{damanik2021local}. These existence and uniqueness results in \cite{damanik2015existence, damanik2021local} can be qualitatively recovered as a corollary to our well-posedness theorem, thereby providing a shorter proof. The details are provided at the end of Section~\ref{preliminaries}, where the slight differences between the decay conditions \eqref{decay}, \eqref{decreaseddecay} and the spaces $V^{\omega,k}$, $C([0,T],V^{\omega,\kappa})$ are addressed. In addition, our theorem adds a statement about continuous dependence of solutions on initial data.

  The approach to the Cauchy problem \eqref{multiplierCauchyproblem} in this paper also relies on the associated Duhamel's formula
   \begin{equation}\label{Duhamelsformula}
  u(t)=e^{-tL}u_0 + \int_0^t e^{-(t - \tau)L}\partial_x(u^{p+1}(\tau)) d\tau	.
  \end{equation}
  
  Here $e^{-tL}$ is the propagator of the corresponding linear evolution equation $\partial_tu=-Lu$, that is, the multiplier operator defined by $$\widehat{e^{-tL}v}(\xi)=e^{-tm(\xi)}\widehat{v}(\xi).$$
As the symbol $m(\xi)$ is purely imaginary, $t\mapsto e^{-tL}$ defines a group action of $\mathbb{R}$ on $V^{\omega,\kappa}$ by isometry for any $\kappa>0$.   
For a function $u \in C([-T,T],V^{\omega,\kappa})$ we will observe that the integrand $e^{-(t-\tau)L}\partial_x (u^{p+1}(\tau))$ is in $C([0,t],V^{\kappa-\delta})$ for any positive $\delta<k$ and thus uniformly bounded in both time and space, so that the RHS in \eqref{Duhamelsformula} can be interpreted pointwise. 
We will characterize classical solutions to the Cauchy problem \eqref{multiplierCauchyproblem} by Duhamel's formula \eqref{Duhamelsformula}, see Proposition \ref{notionofsolution}.

 We define the transformation
 \begin{equation}\label{Duhameloperator}
  (\Gamma_{u_0}u)(t)=e^{-tL}u_0 + \int_0^t e^{-(t - \tau)L}\partial_x (u^{p+1}(\tau)) d\tau	.
  \end{equation}
This will be shown to be a contraction on suitable subsets of the Banach spaces $X^{k,\gamma,T}$ introduced in Definition~\ref{kgammaTspaces}. 
The existence of solutions follows from Banach's fixed point theorem and the inclusion $X^{k,\gamma,T}\subseteq C([-T,T],V^{\omega,\kappa})$. Using the inclusion $C([-T,T],V^{\omega,\kappa}) \subseteq X^{\kappa-\epsilon, \gamma,T}$ and a bootstrap argument we will get unconditional uniqueness in $C([-T,T],V^{\omega,\kappa})$. From the contraction property we also get continuous dependence on initial data in the sense that the data-to-solution map $B_{V^{\omega,k}}(0,R) \rightarrow C([-T,T],V^{\omega,\kappa})$ is Lipschitz.
The key of the proof is in establishing estimates for the operator \eqref{Duhameloperator} in order to show its contractive property; these are presented in Propositions \ref{linear1}, \ref{multilinear} below. 

We remark that our approach is very robust --- neither does it utilize the dispersive nature of the equation nor does it impose any Diophantine condition on the frequency vector $\omega$. By exploiting dispersion and using the Fourier restriction norm method, Tsugawa showed well-posedness of the KdV equation (\eqref{g-KdV} with $p=1$) in a Sobolev-type space of quasi-periodic functions defined by the norm \begin{equation}\label{tsugawanorm}\|u\|_{G^{\sigma,-1/2}}= \Big\| \ |\langle \omega, n \rangle|^{-\frac{1}{2}} \cdot \prod_{i=1}^{\nu} \langle n_i \rangle^{\sigma_i} \cdot \widehat{u}(n) \Big\|_{l^2(\mathbb{Z}^{\nu})},\end{equation}
where $\sigma=(\sigma_1,...,\sigma_{\nu}) \in \mathbb{R}_{\geq 0}^{\nu}$ are parameters satisfying some constraints, see \cite[Theorem 1.1, Proposition 4.1]{tsugawa2012local}. The norm \eqref{tsugawanorm} heavily penalizes Fourier coefficients $\widehat{u}(n)$ for which the frequency  $ \langle \omega, n \rangle $ is small in absolute value. 

The following example illustrates that for some frequency vectors $\omega$, this can lead the above norm \eqref{tsugawanorm} to diverge even if the Fourier coefficients decay exponentially, that is, even for analytic $u$. Take 
$$\omega=(1,\alpha) \quad \text{with} \quad \alpha = \sum_{n=1}^{\infty} \frac{1}{10 \uparrow \uparrow n},$$ where Knuth's up-arrow notation $10 \uparrow \uparrow n$ denotes $n$-fold repeated exponentiation $10^{10^{10^{\cdots}}}$. For $n\in\mathbb{N}$ take integers $q_n:=10\uparrow \uparrow n$ and $p_n:=\sum_{m\leq n} \frac{10 \uparrow \uparrow n}{10 \uparrow \uparrow m}$. Note that $p_n < q_n$. Then 
\begin{equation*}
    |\langle \omega, (-p_n,q_n) \rangle| = q_n\Big| \alpha - \frac{p_n}{q_n} \Big| = \sum_{m>n} \frac{10 \uparrow \uparrow n}{10 \uparrow \uparrow m} =: C(n),
\end{equation*}
Observing that $C(n) \leq q_n^{-n+1}$ amounts to saying that $\alpha$ is a Liouville number. There is plenty of room to spare in this inequality and we can bound 
$$C(n) \leq 2 \cdot \frac{q_n}{10^{q_n}} \lesssim 9^{-q_n} \leq e^{-4\kappa q_n} \leq e^{-2 \kappa|(-p_n,q_n)|}$$ provided that $\kappa \leq \frac{\ln{9}}{4}$. Thus, $|\langle (1, \alpha),  (-p_n, q_n) \rangle |^{-1/2} = C(n)^{-\frac{1}{2}} \gtrsim e^{\kappa |(-p_n,q_n)|}$ for infinitely many $(-p_n,q_n)$. Therefore, the analytic quasi-periodic function $u(x)=\sum_{n \in \mathbb{Z}^{2}}e^{-\kappa|n|}e^{i \langle \omega, n \rangle x}$ does not belong to the space $G^{\sigma,-1/2}$.

What brings together the equations \eqref{g-KdV}, \eqref{gBO} and \eqref{dNLS} subsumed by Theorems \ref{multipliertheoremreal} \& \ref{multipliertheorem} is the derivative in the nonlinearity, which is the cause of the main difficulty in studying the associated Cauchy problem.
In the absence of a derivative in the nonlinearity, as for the nonlinear Schr\"odinger equation, a simpler argument can be employed. This simpler argument also applies to the generalized Benjamin-Bona-Mahony equation, as we will see in Section~\ref{noderivativemodels}.

 The paper is organized as follows. In Section \ref{preliminaries}, we present preliminaries needed in the proof of Theorems \ref{multipliertheoremreal} and \ref{multipliertheorem}, most notably estimates for the operator \eqref{Duhameloperator}. In Section \ref{Proof}, we put everything together and present the proof of the theorems just mentioned. In Section \ref{noderivativemodels}, unconditional local well-posedness of the nonlinear Schr\"odinger equation and the generalized Benjamin-Bona-Mahony equation in some spaces of quasi-periodic functions is shown.

  \subsection*{Acknowledgements}
  The author was supported by NSF grant DMS-2154022 and NSF grant DMS-2054194. The author would like to thank Rowan Killip and Monica Vi\c{s}an for helpful discussions and for providing feedback on earlier versions of the paper.

  \section{Preliminaries}\label{preliminaries}
We begin by showing that the notion of classical solution to the Cauchy problem \eqref{multiplierCauchyproblem} and Duhamel's formula are equivalent for functions in $C([-T,T],V^{\omega,\kappa})$.

 \begin{proposition}\label{notionofsolution}
  	Let $u \in C([-T,T],V^{\omega,\kappa})$. The following are equivalent:
  	\begin{enumerate}
  	\item $u$ is a classical solution to the Cauchy problem \eqref{multiplierCauchyproblem} with initial data $$u_0(x)=\sum_{n}\widehat{u}_0(n)e^{i\langle \omega, n \rangle x}.$$	
  	\item The Fourier coefficients given by \eqref{seriesexpansion} 
  	satisfy the following system of coupled integral equation:
  \begin{multline}\label{integralequation}
  	\widehat{u}(t,n)=\widehat{u}_0(n)e^{-tm(\langle \omega, n \rangle)} \\ -i\langle \omega, n\rangle \int_0^t\Big[e^{-m(\langle \omega, n \rangle)(t - \tau)}\sum_{q_1+\cdots+q_{p+1}=n} \widehat{u}(\tau,q_1)\cdot\cdot \cdot \widehat{u}(\tau,q_{p+1}) \Big]d \tau.
  	\end{multline}
  	\item Duhamel's formula \eqref{Duhamelsformula} holds pointwise throughout $[-T,T] \times \mathbb{R}$.
  	\end{enumerate}  
\end{proposition}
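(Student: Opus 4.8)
The plan is to establish the three-way equivalence by proving the cycle of implications (1) $\Rightarrow$ (3) $\Rightarrow$ (2) $\Rightarrow$ (1), where the passage between the coefficient integral equation \eqref{integralequation} and the pointwise Duhamel formula \eqref{Duhamelsformula} is essentially a matter of matching Fourier expansions. Throughout I would work under the standing assumption that $u \in C([-T,T],V^{\omega,\kappa})$, which by the decay condition \eqref{decay-condition} guarantees that every series manipulation below converges absolutely and uniformly, so that termwise differentiation and interchange of sum and integral are justified.

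For (3) $\Leftrightarrow$ (2), I would simply expand each term of \eqref{Duhamelsformula} in its quasi-periodic Fourier series. The propagator acts diagonally, $\widehat{e^{-tL}u_0}(n)=e^{-tm(\langle\omega,n\rangle)}\widehat{u}_0(n)$, the spatial derivative brings down a factor $i\langle\omega,n\rangle$, and the $n$-th Fourier coefficient of $u^{p+1}$ is the convolution $\sum_{q_1+\cdots+q_{p+1}=n}\widehat{u}(\tau,q_1)\cdots\widehat{u}(\tau,q_{p+1})$. Since the Fourier coefficients of a quasi-periodic function are uniquely determined via \eqref{coefficientformula}, the pointwise identity \eqref{Duhamelsformula} holds if and only if it holds coefficientwise, which is exactly \eqref{integralequation}. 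The absolute convergence needed to read off coefficients and to pull the integral inside the sum comes from the observation, already noted in the excerpt, that the integrand lies in $C([0,t],V^{\kappa-\delta})$ and is therefore uniformly bounded in space and time.

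For (1) $\Rightarrow$ (3), suppose $u$ is a classical solution: it is $C^1$ in $t$ and solves $\partial_t u = -Lu - \partial_x(u^{p+1})$ pointwise. I would apply the linear propagator by computing $\frac{d}{d\tau}\big(e^{\tau L}u(\tau)\big)=e^{\tau L}\big(\partial_\tau u + Lu\big)=-e^{\tau L}\partial_x(u^{p+1}(\tau))$, integrate from $0$ to $t$, and apply $e^{-tL}$ to recover \eqref{Duhamelsformula}. The one point requiring care is that differentiating the operator family $\tau\mapsto e^{\tau L}u(\tau)$ and commuting $\frac{d}{d\tau}$ past $e^{\tau L}$ must be justified at the level of the $V^{\omega,\cdot}$-valued function; this I would do coefficientwise, where each coefficient is an ordinary scalar ODE and the exponential growth bound \eqref{growthcondition} controls the symbol. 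For (2) $\Rightarrow$ (1), starting from \eqref{integralequation} I would differentiate each coefficient equation in $t$ (legitimate since the integrand is continuous in $\tau$), obtaining the coefficientwise form of $\partial_t u + Lu + \partial_x(u^{p+1})=0$, and verify that the resulting series for $\partial_t u$ converges in $V^{\omega,\kappa}$ uniformly in $t$, so that $u$ is genuinely $C^1$ in time with values in the function space and the equation holds pointwise.

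The main obstacle I expect is entirely analytic rather than conceptual: justifying the interchange of differentiation, summation, and integration uniformly in the function-space topology, and in particular verifying that the series defining $Lu$ and $\partial_x(u^{p+1})$ converge appropriately so that $u(\cdot)$ is a $C^1$ curve in $V^{\omega,\kappa}$. The key input here is the gain encoded in \eqref{growthcondition}, which forces $Lu \in V^{\omega,\varepsilon}$, together with the fact that multiplication is bounded on the algebra $V^{\omega,\kappa}$ under the $\ell^1$-based norm \eqref{decay-condition}, so that $u^{p+1}$ and its $x$-derivative remain controlled. Once these convergence facts are in hand, each implication reduces to a termwise scalar computation.
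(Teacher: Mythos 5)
Your proposal is correct and follows essentially the same route as the paper: everything is reduced to coefficientwise scalar computations (the integrating factor $e^{tm(\langle\omega,n\rangle)}$ for each mode, uniqueness of quasi-periodic Fourier coefficients, and interchanges of sum, integral, and derivative justified by the Banach algebra property, the boundedness of $\partial_x:V^{\omega,\kappa}\to V^{\omega,\varepsilon}$, and the growth bound \eqref{growthcondition}). The only difference is cosmetic — you traverse the cycle as $(1)\Rightarrow(3)\Leftrightarrow(2)\Rightarrow(1)$ while the paper does $(1)\Rightarrow(2)\Rightarrow(3)\Rightarrow(1)$ — and the one step you flag as delicate (obtaining the scalar ODE for each $\widehat{u}(t,n)$ from the pointwise classical solution) is handled in the paper exactly as you suggest, by integrating the equation in time through the summation and comparing coefficients.
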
 
  
The following fact is used several times in the proof.
  \begin{proposition}\label{BanachalgebraVk}
  	The space $V^{\omega,k}$ is a commutative  Banach algebra.
  \end{proposition}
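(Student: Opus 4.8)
The plan is to verify directly that $V^{\omega,k}$ is closed under pointwise multiplication and that the norm $\|\cdot\|_{\omega,k}$ is submultiplicative, since the remaining Banach-algebra axioms (completeness as a normed space, commutativity, associativity, and the existence of a unit) are either inherited from $\ell^1$ or immediate. The essential observation is that multiplication of two functions in $V^{\omega,k}$ corresponds to convolution of their Fourier coefficients. Concretely, if $u(x)=\sum_{n}\wh u(n)e^{i\langle\omega,n\rangle x}$ and $v(x)=\sum_{n}\wh v(n)e^{i\langle\omega,n\rangle x}$, then formally
\begin{equation*}
u(x)v(x)=\sum_{n\in\Z^\nu}\Big(\sum_{q\in\Z^\nu}\wh u(q)\wh v(n-q)\Big)e^{i\langle\omega,n\rangle x},
\end{equation*}
so that $\wh{uv}(n)=(\wh u*\wh v)(n)$. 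The linear independence of the entries of $\omega$ over $\Z$ guarantees that the exponentials $e^{i\langle\omega,n\rangle x}$ are distinct for distinct $n$, so this collection of coefficients is the genuine quasi-periodic expansion of $uv$; I would confirm this by appealing to the coefficient-recovery formula \eqref{coefficientformula}.

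Granting the convolution identity, the norm estimate follows from the subadditivity of $|n|$ together with Young's convolution inequality on $\ell^1$. The key step is the pointwise bound on the weights: for any $q\in\Z^\nu$ one has $e^{k|n|}\le e^{k|q|}e^{k|n-q|}$ by the triangle inequality $|n|\le|q|+|n-q|$. Hence
\begin{equation*}
\|uv\|_{\omega,k}=\sum_{n}\Big|\sum_{q}\wh u(q)\wh v(n-q)\Big|e^{k|n|}\le\sum_{n}\sum_{q}|\wh u(q)|e^{k|q|}\,|\wh v(n-q)|e^{k|n-q|},
\end{equation*}
and after interchanging the order of summation (justified by nonnegativity of the terms, via Tonelli) the double sum factors as $\|u\|_{\omega,k}\|v\|_{\omega,k}<\infty$. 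This simultaneously shows that $uv\in V^{\omega,k}$ and that the norm is submultiplicative.

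It remains to dispatch the structural points. Completeness is inherited from the isometric identification $u\mapsto(\wh u(n)e^{k|n|})_n$ between $V^{\omega,k}$ and $\ell^1(\Z^\nu)$, which is a bijection onto all of $\ell^1$; commutativity and associativity of the product pass from pointwise multiplication of functions (or equivalently from commutativity and associativity of convolution); and the constant function $1$, which corresponds to the coefficient sequence $\del_{n,0}$, is a multiplicative identity lying in $V^{\omega,k}$ with $\|1\|_{\omega,k}=1$. I would note in passing that this normalization makes $V^{\omega,k}$ a \emph{unital} Banach algebra, though only the submultiplicative inequality $\|uv\|_{\omega,k}\le\|u\|_{\omega,k}\|v\|_{\omega,k}$ is used in the sequel.

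I do not anticipate a serious obstacle here; the only point requiring genuine care is justifying that the formal convolution computation actually produces the quasi-periodic expansion of the product $uv$ rather than merely a candidate series. The subtlety is that $V^{\omega,k}$ consists of functions on $\R$, not on a torus, so one must confirm both that the double series defining $uv$ converges absolutely and uniformly (immediate from the $\ell^1$ bound above, since $|e^{i\langle\omega,n\rangle x}|=1$) and that rearrangement into the displayed single series is legitimate. Absolute convergence makes both of these routine, so the heart of the matter is the elementary weight inequality $e^{k|n|}\le e^{k|q|}e^{k|n-q|}$ feeding into Young's inequality.
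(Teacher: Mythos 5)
Your proof is correct and follows essentially the same route as the paper: identify the coefficients of $uv$ with the convolution $\wh u * \wh v$, apply the weight inequality $e^{k|n|}\le e^{k|q|}e^{k|n-q|}$, and factor the resulting double sum via Tonelli to get submultiplicativity, with completeness inherited from the weighted $\ell^1$ identification. You supply more detail than the paper (the justification that the convolution series is the genuine quasi-periodic expansion, and the unital/associativity remarks), but the underlying argument is the same.
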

  
  \begin{proof} 
  	For $u,v \in V^{\omega,k}$, we may bound
  	\begin{multline}
  		\|uv\|_{\omega,k}=\sum_n |\widehat{uv}(n)|e^{k|n|} =\sum_n \Big|\sum_{q+r=n} \widehat{u}(q)\widehat{v}(r)\Big|e^{k|n|} \\ \leq \sum_q |\widehat{u}(q)|e^{k|q|} \cdot \sum_r |\widehat{u}(r)|e^{k|r|} = \|u\|_{\omega,k} \cdot \|v\|_{\omega,k}.
  	\end{multline}
   As the space is defined by a weighted $l^1$-norm, it is complete. 
  \end{proof}

 Note that the coefficients of the $(p+1)$-th power of a function $u \in V^{\omega,k}$ are given by $(p+1)$-fold discrete convolution which we denote by  $$[*^{p+1}\widehat{u}](n):=\sum_{q_1+\cdots + q_{p+1} = n}\widehat{u}(q_1)\cdot \cdot \cdot \widehat{u}(q_{p+1}).$$

  \begin{proof}[Proof of Proposition \ref{notionofsolution}]
 Observe that for a function $u \in C([-T,T],V^{\omega,\kappa})$, we have that $Lu + \partial_x(u^{p+1})$ is in $C([-T,T],V^{\omega,\varepsilon})$, where $\varepsilon$ is as in \eqref{growthcondition}. Indeed, differentiation $\partial_x:V^{\omega,\kappa} \rightarrow V^{\omega, \varepsilon}$ is a bounded linear operator as $$\|\partial_xu\|_{\omega,\varepsilon} = \sum_{n \in \mathbb{Z}^{\nu}}|\langle \omega, n \rangle||\widehat{u}(t,n)|e^{\varepsilon|n|} \leq \Big[|\omega|\sup_{n \in \mathbb{Z}^{\nu}}|n|e^{(\varepsilon -\kappa) |n|}\Big] \cdot \|u\|_{\omega,k} \lesssim \|u\|_{\omega,k}.$$ Together with the Banach algebra property this yields $\partial_x(u^{p+1}) \in C([-T,T],V^{\omega,\varepsilon})$, and the growth bound \eqref{growthcondition} guarantees that $Lu \in C([-T,T],V^{\omega, \varepsilon}).$ Therefore, $Lu+\partial_x(u^{p+1})= \sum b(t,n)e^{i \langle \omega, n \rangle x}$ is a spatially quasi-periodic function whose coefficients $$b(t,n)=m(\langle \omega, n \rangle) \widehat{u}(t,n) + i \langle \omega, n \rangle [*^{p+1}\widehat{u}](t,n)$$ satisfy 
  \begin{equation}\label{coefficientbound}
      |b(t,n)| \lesssim e^{- \varepsilon |n|}
  \end{equation}
  uniformly in time. 
  
  $(1)\implies(2)$:
 Suppose that $u \in C([-T,T],V^{\omega,\kappa})$ is a classical solution. The bound \eqref{coefficientbound} allows us to integrate $$\partial_t u(t,x) = -L u(t,x) - \partial_x(u^{p+1})(t,x) = \sum_{n \in \mathbb{Z}^{\nu}}-b(t,n)e^{i \langle \omega, n \rangle x}$$ in time through the summation, so that 
  $$u(t,x)=\sum_{n \in \mathbb{Z}^{\nu}}\widehat{u}(t,n)e^{i\langle \omega, n \rangle x}=\sum_{n \in \mathbb{Z}^{\nu}}\Big(\widehat{u}_0(n)-\int_0^t b(\tau,n) d\tau \Big)e^{i \langle \omega, n \rangle x}.$$
  Comparing coefficients we see that $\widehat{u}(t,n)$ is continuously differentiable in time with 
  $$\partial_t\widehat{u}(t,n)=-b(t,n)=-m(\langle \omega, n \rangle) \widehat{u}(t,n) - i \langle \omega, n \rangle [*^{p+1}\widehat{u}](t,n).$$
  Applying the integrating factor $e^{tm(\langle \omega, n \rangle)}$ gives the desired integral identity \eqref{integralequation}. 
  
    $(2)\implies(3)$: multiplying the integral equation \eqref{integralequation} by the character $e^{m(\langle \omega, n \rangle)x}$ and summing over $n \in \mathbb{Z}^{\nu}$, we get that
    \begin{align*}\label{integralequation}
  	 u(t,x)=\sum_n \widehat{u}(t,n)e^{i \langle \omega, n \rangle x} &=\sum_n \widehat{u}_0(n)e^{-tm(\langle \omega, n \rangle)}e^{i \langle \omega, n \rangle x} \\ - \sum_n  \int_0^t\Big[e^{-m(\langle \omega, n \rangle)(t - \tau)}&\cdot i\langle \omega, n\rangle\sum_{q_1+\cdots+q_{p+1}=n} \widehat{u}(\tau,q_1)\cdots \widehat{u}(\tau,q_{p+1}) \Big]d \tau \cdot e^{i\langle \omega, n \rangle x} \\
  	&= e^{-tL}u_0 + \int_0^t e^{-(t - \tau)L}\partial_x(u^{p+1}(\tau))d \tau.
    \end{align*}
In the last line we interchanged summation over $n$ and integration in the time variable $\tau$. This is justified by uniform convergence of the series expansion of the integrand on the right-hand side. Indeed, the terms of that series satisfy the bound $$\Big|e^{-m(\langle \omega, n \rangle)(t - \tau)} i\langle \omega, n\rangle \sum_{q_1+\cdots+q_{p+1}=n} \widehat{u}(\tau,q_1)\cdot\cdot \cdot \widehat{u}(\tau,q_{p+1}) \Big| \lesssim_{|\omega|} |n|e^{-k|n|} \lesssim_{\delta} e^{-(k - \delta)|n|}$$ for any $\delta>0$. This bound follows as the integrand $e^{-(t - \tau)L}\partial_x(u^{p+1}(\tau))$ belongs to the space $C([0,t],V^{\omega,\kappa-\delta})$ due to the Banach algebra property, boundedness of differentiation $\partial_x:V^{\omega,\kappa} \rightarrow V^{\omega,\kappa-\delta}$, and the fact that the propagator acts by isometry.

   $(3)\implies (1)$: As noted previously $e^{-(t - \tau)L}\partial_x(u^{p+1}(\tau))$ is in $C([0,t],V^{\kappa-\delta})$ for any $0<\delta<\kappa$ and has quasi-periodic series expansion given by $$e^{-(t - \tau)L} \partial_x u^{p+1} = \sum \Big[ e^{-(t - \tau)m(\langle \omega, n \rangle)}i\langle \omega, n \rangle[*^{p+1}\widehat{u}](\tau,n)\Big] e^{i\langle \omega,n\rangle x}.$$
   Upon plugging this into Duhamel's formula, this allows us to interchange integration and summation (over $n$) so that we obtain 
   \begin{multline*}
  	u(t,x)=\sum_{n \in \mathbb{Z}^{\nu}}\widehat{u}(t,n)e^{i\langle \omega, n \rangle x}  = \sum_{n} \bigg[\widehat{u}_0(n)e^{-tm(\langle \omega, n \rangle)}  \\-i\langle \omega, n\rangle e^{-t m(\langle \omega, n \rangle)} \! \int_0^t \! \Big[e^{\tau m(\langle \omega, n \rangle)} \!\!\!\! \sum_{q_1+\cdots+q_{p+1}=n} \!\!\!\! \widehat{u}(\tau,q_1)\cdots \widehat{u}(\tau,q_{p+1}) \Big]d \tau \bigg]e^{i \langle \omega, n \rangle x}.
  	\end{multline*}
   It follows that $u$ is continuously differentiable in time with derivative given by the spatially quasi-periodic function with Fourier coefficients 
   $$-m(\langle \omega, n \rangle)\widehat{u}_0(n)e^{-tm(\langle \omega, n \rangle)}-i\langle \omega, n \rangle \!\!\!\! \sum_{q_1+\cdots+q_{p+1}=n} \!\!\!\! \widehat{u}(q_1,t)\cdot\cdot \cdot \widehat{u}(q_{p+1},t).$$
   That is, \eqref{multiplierCauchyproblem} is satisfied:
   $\partial_t u = -Lu -\partial_x (u^{p+1})=0$ and $u(0,x)=u_0(x)$.   
  \end{proof}

  \subsection{Linear and multilinear estimates}\label{estimatessection}
  In order to show the contractive property of the transformation \eqref{Duhameloperator} we establish both linear and multilinear estimates.

\begin{definition}\label{kgammaTspaces}
  	For $\gamma,T,k>0$ such that $k-\gamma T >0$, define $X^{k,\gamma,T}$ to be the space of all spatially $\omega$-quasi-periodic functions on $[-T,T]$ with continuous Fourier coefficients $\widehat{u}:\mathbb{Z}^{\nu} \times [-T,T] \rightarrow \mathbb{C}$ for which the following norm is finite: $$||u||_{k,\gamma,T}:=\sum_{n \in \mathbb{Z}^{\nu}} \sup_{t \in [-T,T]}|\widehat{u}(n,t)|e^{(k-\gamma |t|)|n|} \ .$$
  \end{definition}

  Two key features of this norm are the exponential weights $e^{(k-\gamma |t|)|n|}$ with parameter decaying linearly in time, and that the supremum over time is inside the sum over frequencies. Both features are of crucial importance in establishing the contractive property of the mapping \eqref{Duhameloperator}; see Proposition \ref{linear1} and Remark \ref{keyfeatures}. 
 
 \begin{proposition}[Linear estimates]\label{linear1}
 	For any initial data $u_0 \in V^{\omega,k}$
 	\begin{equation}\label{firstlinear}
 	\|e^{-tL}u_0\|_{k,\gamma,T} = \|u_0\|_{\omega,k}.	
 	\end{equation}
 	For $W \in X^{k,\gamma,T}$ we have that 
 	\begin{equation}\label{secondlinear}
 		\Bigl\| \int_0^te^{-(t-\tau)L} \partial_x W(\tau) d\tau \Bigr\|_{k,\gamma,T} \lesssim_{|\omega|} \frac{1}{\gamma}\|W\|_{k,\gamma,T}\ .
 	\end{equation}
 \end{proposition}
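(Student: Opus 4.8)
The plan is to work entirely at the level of Fourier coefficients, exploiting that the propagators $e^{-tL}$ and $e^{-(t-\tau)L}$ act on the $n$-th mode by multiplication by the \emph{unimodular} factors $e^{-tm(\langle\omega,n\rangle)}$ and $e^{-(t-\tau)m(\langle\omega,n\rangle)}$, since $m$ is purely imaginary. Consequently, once we pass to moduli of coefficients these factors drop out, and both estimates reduce to elementary one-variable computations involving the weight $e^{(k-\gamma|t|)|n|}$ from Definition~\ref{kgammaTspaces}.

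For the identity \eqref{firstlinear}, the $n$-th coefficient of $e^{-tL}u_0$ at time $t$ is $\widehat{u}_0(n)e^{-tm(\langle\omega,n\rangle)}$, whose modulus equals $|\widehat{u}_0(n)|$ for every $t$. Hence $\sup_{t\in[-T,T]}|\widehat{u}_0(n)|e^{(k-\gamma|t|)|n|}=|\widehat{u}_0(n)|e^{k|n|}$, the supremum being attained at $t=0$ because the exponent $k-\gamma|t|$ is maximal there. Summing over $n$ produces exactly $\|u_0\|_{\omega,k}$, giving equality rather than a mere inequality.

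For \eqref{secondlinear}, I would first record that the $n$-th coefficient of the Duhamel term is $\widehat{v}(t,n)=i\langle\omega,n\rangle\int_0^t e^{-(t-\tau)m(\langle\omega,n\rangle)}\widehat{W}(\tau,n)\,d\tau$, so that the mode $n=0$ is annihilated by the derivative and may be discarded. Writing $a_n:=\sup_{s\in[-T,T]}|\widehat{W}(s,n)|e^{(k-\gamma|s|)|n|}$, so that $\|W\|_{k,\gamma,T}=\sum_n a_n$ and $|\widehat{W}(\tau,n)|\leq a_n e^{-(k-\gamma|\tau|)|n|}$, one takes absolute values (using $|e^{-(t-\tau)m}|=1$), inserts this pointwise bound, and multiplies by the weight $e^{(k-\gamma|t|)|n|}$. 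After the $e^{\pm k|n|}$ factors cancel, the time integral of the weight yields, for $n\neq 0$ and either sign of $t$,
$$e^{(k-\gamma|t|)|n|}\int_0^{t} e^{-(k-\gamma|\tau|)|n|}\,d\tau \;\leq\; \frac{1-e^{-\gamma|t|\,|n|}}{\gamma|n|}\;\leq\;\frac{1}{\gamma|n|}.$$
Combined with $|\langle\omega,n\rangle|\leq|\omega|\,|n|$, the $\tfrac{1}{|n|}$ exactly absorbs the derivative loss, giving $|\widehat{v}(t,n)|e^{(k-\gamma|t|)|n|}\leq \tfrac{|\omega|}{\gamma}\,a_n$ uniformly in $t$; summing over $n$ yields \eqref{secondlinear}.

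The main obstacle — and indeed the entire reason for the design of $X^{k,\gamma,T}$ — is the cancellation displayed above: the integral of the weight contributes a factor of order $\tfrac{1}{\gamma|n|}$ precisely because the weight parameter $k-\gamma|t|$ decays \emph{linearly} in time, and it is this $\tfrac{1}{|n|}$ that tames the derivative in the nonlinearity at the cost of a $\tfrac{1}{\gamma}$. I would take care to verify the computation for both signs of $t$, since the integral $\int_0^t$ runs backwards when $t<0$, and to note explicitly that the potential division by $|n|$ is harmless because the $n=0$ mode is killed by $\partial_x$.
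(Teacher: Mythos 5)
Your proposal is correct and follows essentially the same route as the paper: both treat the propagator as unimodular on each mode, split the weight as $e^{(k-\gamma|t|)|n|}=e^{(k-\gamma|\tau|)|n|}e^{-\gamma(|t|-|\tau|)|n|}$, and use the bound $\bigl|\int_0^t e^{-\gamma(|t|-|\tau|)|n|}\,d\tau\bigr|\leq\tfrac{1}{\gamma|n|}$ to absorb the factor $|\langle\omega,n\rangle|\leq|\omega||n|$ coming from the derivative. Your explicit remarks about the $n=0$ mode and both signs of $t$ are careful touches the paper leaves implicit, but the argument is the same.
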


 \begin{proof} As the symbol of the multiplier operator $L$ is purely imaginary, \eqref{firstlinear} is clear. Using the triangle inequality we get that
 	\begin{multline*}
 	\Bigl\| \int_0^te^{-(t-\tau)L} \partial_x W(\tau) d\tau \Bigr\|_{k,\gamma,T} \leq \sum_{n \in \mathbb{Z}^{\nu}} \sup_{t \in [-T,T]} \bigg| \int_0^t |\langle \omega, n \rangle||\widehat{W}(\tau,n)| e^{(k-\gamma |t|)|n|} d \tau \bigg|. 
 	\end{multline*}
 	Distributing the exponential weight $e^{(k - \gamma |t|)|n|}=e^{(k - \gamma |\tau|)|n|}e^{-\gamma(|t| - |\tau|)|n|}$ and using
 	\begin{equation}\label{keystep}
 		\bigg| \int_0^t |n|e^{-\gamma(|t| - |\tau|)|n|} d\tau \bigg| \leq \frac{1}{\gamma}
 	\end{equation}
 	we further bound this by 
 	\begin{multline*}
 	\sum_{n \in \mathbb{Z}^{\nu}} \sup_{t \in [-T,T]} |\omega|\bigg| \int_0^t |n|e^{-\gamma(|t| - |\tau|)|n|} d\tau \bigg| \sup_{\tau \in [0,t]} |\widehat{W}(\tau,n)|e^{(k-\gamma |\tau|)|n|} \\
 	\leq \frac{|\omega|}{\gamma} \sum_{n \in \mathbb{Z}^{\nu}} \sup_{\tau \in [-T,T]} |\widehat{W}(\tau,n)|e^{(k-\gamma |\tau|)|n|} = \frac{|\omega|}{\gamma}\|W\|_{k,\gamma,T},
 	\end{multline*} which completes the proof of \eqref{secondlinear}. \qedhere
 \end{proof}
 
 \begin{remark}\label{keyfeatures}
  Observe that the linear decay in time in the exponential weight $e^{(k - \gamma |t|)|n|}$ allows us to handle the linear growth of the integrand in $|n|$ arising from the factor $|\langle \omega, n \rangle|$ by utilizing inequality \eqref{keystep}.
  This linear growth in $|n|$ is how the derivative in the nonlinearity manifests on the Fourier side, and poses the main difficulty for establishing the contractive property. The norm $\|\cdot\|_{k,\gamma,T}$ was precisely designed to overcome this difficulty. 
  
 To illustrate this point, compare with the generalized Benjamin-Bona-Mahony equation discussed in Section \ref{noderivativemodels}. Taking the Fourier transform of the equation \eqref{gBBMODE} associated with \eqref{gBBM}, there is no linear growth in $|n|$ in the differential equation for the Fourier coefficients. In this sense, the derivative in the nonlinearity of \eqref{gBBM} is harmless. In this setting, easier tools are available to establish well-posedness, see Section~\ref{noderivativemodels}. 
 
 Moreover, note that in the proof of the second linear estimate \eqref{secondlinear} it is important that in the definition of the norm $\|\cdot \|_{k,\gamma,T}$ the supremum is inside the sum. 
 \end{remark}

  \begin{proposition} [Multilinear estimate]\label{multilinear}
  	The space $X^{k,\gamma,T}$ is a commutative Banach algebra, invariant under spatial translations. Concretely, for an integer $p\geq 1$ and $u_1,..,u_{p+1} \in X^{k,\gamma,T}$ we have that
  $$\|u_1\cdot \cdot \cdot u_{p+1}\|_{k,\gamma,T} \leq \|u_1\|_{k,\gamma,T} \cdot \cdot \cdot \|u_{p+1}\|_{k,\gamma,T}.$$
  \end{proposition}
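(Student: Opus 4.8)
The plan is to follow the template of Proposition~\ref{BanachalgebraVk}, with the extra bookkeeping forced by the time-dependent weight $e^{(k-\gamma|t|)|n|}$ and by the placement of the supremum inside the frequency sum. It suffices to establish the bilinear bound $\|uv\|_{k,\gamma,T} \le \|u\|_{k,\gamma,T}\|v\|_{k,\gamma,T}$; the general $(p+1)$-fold estimate then follows by induction, and commutativity is inherited from the commutativity of the convolution computing the product coefficients $\widehat{uv}(n,t) = \sum_{q+r=n}\widehat{u}(q,t)\widehat{v}(r,t)$.

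For the bilinear estimate I would begin from this convolution formula, apply the triangle inequality, and then exploit two structural facts. First, the constraint $k - \gamma T > 0$ built into Definition~\ref{kgammaTspaces} forces $k - \gamma|t| \ge k - \gamma T > 0$ for every $t \in [-T,T]$; combined with subadditivity $|n| = |q+r| \le |q| + |r|$, this lets me split the weight multiplicatively as $e^{(k-\gamma|t|)|n|} \le e^{(k-\gamma|t|)|q|}e^{(k-\gamma|t|)|r|}$. Second, for nonnegative functions the supremum of a product is dominated by the product of the suprema, so I can pull the time-supremum through each term of the convolution and distribute it onto $u$ and $v$ separately. Combining these, I would bound $\sup_{t}|\widehat{uv}(n,t)|e^{(k-\gamma|t|)|n|}$ by $\sum_{q+r=n}\bigl[\sup_t|\widehat{u}(q,t)|e^{(k-\gamma|t|)|q|}\bigr]\bigl[\sup_t|\widehat{v}(r,t)|e^{(k-\gamma|t|)|r|}\bigr]$, sum over $n$, and reindex the resulting double sum over $\{(n,q,r):q+r=n\}$ as an unrestricted sum over pairs $(q,r)$ (Tonelli for nonnegative series), which factors into $\|u\|_{k,\gamma,T}\|v\|_{k,\gamma,T}$.

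To complete the claim I would record the remaining structural statements. That $uv$ actually lies in $X^{k,\gamma,T}$, i.e.\ has continuous Fourier coefficients, follows since each $\widehat{uv}(n,t)$ is a convolution of continuous functions that converges uniformly in $t$ by the bound just established. Translation invariance holds because a spatial shift by $h$ multiplies $\widehat{u}(n,t)$ by the unimodular factor $e^{i\langle\omega,n\rangle h}$, leaving the norm unchanged. Completeness follows the weighted $\ell^1$ argument of Proposition~\ref{BanachalgebraVk}: the norm dominates $\sup_t|\widehat{u}(n,t)|e^{(k-\gamma T)|n|}$ for each fixed $n$, so a Cauchy sequence in $X^{k,\gamma,T}$ has coefficients converging uniformly in $t$, hence to continuous limit coefficients, and a routine Fatou argument upgrades this coefficient-wise limit to convergence in $\|\cdot\|_{k,\gamma,T}$.

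I expect the only genuine subtlety, more a point to handle carefully than a true obstacle, to be the interchange of the supremum with the convolution sum. This works precisely because every quantity involved is nonnegative and the exponential weight splits multiplicatively, which in turn hinges on the sign condition $k - \gamma|t| \ge 0$ guaranteed by the definition of the space; without the constraint $k-\gamma T>0$ the weight could fail to split and the argument would break down.
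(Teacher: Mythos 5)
Your proposal is correct and follows essentially the same route as the paper: triangle inequality on the convolution, multiplicative splitting of the weight via $|n|\le|q|+|r|$ (valid because $k-\gamma T>0$), pulling the time-supremum onto each factor, and reindexing the double sum, with completeness and translation invariance handled exactly as in the paper. The extra details you supply (the sign condition on the weight, continuity of the product coefficients, the explicit Tonelli step) are all sound refinements of the paper's sketch.
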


  \begin{proof}
  The proof of Proposition \ref{BanachalgebraVk} adapts to this setting; 
  for $u,v \in X^{k,\gamma,T}$ we may bound \begin{equation*}\begin{split}
  	\|u \cdot v \|_{k,\gamma,T} & = \sum_{n \in \mathbb{Z}^{\nu}} \sup_{t \in [-T,T]} |\widehat{uv}(t,n)|e^{(k - \gamma |t|)|n|} \\ &  =  \sum_{n \in \mathbb{Z}^{\nu}} \sup_{t \in [-T,T]} \left| \sum_{q+r=n}\widehat{u}(t,q)\widehat{v}(t,r)\right|e^{(k - \gamma |t|)|n|} \\ & \leq \bigg[ \sum_{q} \sup_{t\in[-T,T]}|\widehat{u}(t,q)|e^{(k - \gamma |t|)|q|} \bigg] \cdot \bigg[ \sum_{r} \sup_{t\in[-T,T]}|\widehat{u}(t,r)|e^{(k - \gamma |t|)|r|} \bigg] \\ & = \|u\|_{k,\gamma,T} \cdot \|v\|_{k,\gamma,T}.
   \end{split}
  \end{equation*}
  Completeness of the space $X^{k,\gamma,T}$ follows from the standard proof of completeness of $l^1(\mathbb{Z}^{\nu})$ combined with completeness of $C([-T,T],\mathbb{C})$. Spatial translation amounts to altering the Fourier coefficients by a complex phase, thereby leaving the norm unchanged. \qedhere
  \end{proof}
 Finally, we relate the spaces $X^{k,\gamma,T}$ to the space $C([-T,T],V^{\omega,\kappa})$. Note that for $k-\gamma T> \kappa > 0$ we have a norm-decreasing inclusion $X^{k,\gamma,T} \subseteq C([0,T],V^{\omega,\kappa})$. In the proof of uniqueness in Theorem \ref{multipliertheoremreal}, we will also use the following embedding of normed spaces.
 \begin{lemma}\label{embedding}
  	For any $\kappa> \varepsilon>0$, and $\gamma,T>0$ with $\kappa-\varepsilon-\gamma T>0$ we have $$C([-T,T],V^{\omega, \kappa}) \subseteq X^{\kappa -\varepsilon, \gamma, T}.$$ In fact, $$\|u\|_{\kappa-\varepsilon,\gamma,T} \leq Q\|u\|_{C_tV_x^{\omega,\kappa}}$$ with $Q=\sum_{n \in \mathbb{Z}^{\nu}} e^{- \varepsilon|n|}$.
  \end{lemma}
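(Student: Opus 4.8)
The plan is to compare the two exponential weights term by term. Writing $k=\kappa-\varepsilon$, the weight in the $X^{\kappa-\varepsilon,\gamma,T}$-norm is $e^{(\kappa-\varepsilon-\gamma|t|)|n|}$, which I factor as
$$e^{(\kappa-\varepsilon-\gamma|t|)|n|}=e^{\kappa|n|}\,e^{-(\varepsilon+\gamma|t|)|n|}\leq e^{\kappa|n|}\,e^{-\varepsilon|n|},$$
the inequality holding uniformly in $t$ simply because $\gamma|t|\geq 0$. This single comparison is the heart of the argument: the linear-in-time decay built into the $X$-norm costs nothing, and the $\varepsilon$ of analyticity width we give up is exactly what furnishes the summable factor $e^{-\varepsilon|n|}$.

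Before running the estimate I would check that $u$ genuinely lies in $X^{\kappa-\varepsilon,\gamma,T}$ as defined in Definition~\ref{kgammaTspaces}, i.e.\ that its Fourier coefficients are continuous in time. Since $|\widehat{v}(n)|\leq e^{-\kappa|n|}\|v\|_{\omega,\kappa}$ for every $v\in V^{\omega,\kappa}$, each coefficient map $v\mapsto \widehat{v}(n)$ is a bounded linear functional on $V^{\omega,\kappa}$; hence continuity of $t\mapsto u(\cdot,t)$ in $V^{\omega,\kappa}$ transfers to continuity of each $t\mapsto \widehat{u}(n,t)$, and membership is established.

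Next I would carry out the estimate. For each fixed $n$, the weight comparison above together with the elementary bound $|\widehat{u}(n,t)|e^{\kappa|n|}\leq \|u(\cdot,t)\|_{\omega,\kappa}\leq \|u\|_{C_tV_x^{\omega,\kappa}}$ (a single term is dominated by the entire $l^1$-sum, then by the supremum over $t$) gives
$$\sup_{t\in[-T,T]}|\widehat{u}(n,t)|\,e^{(\kappa-\varepsilon-\gamma|t|)|n|}\leq e^{-\varepsilon|n|}\,\|u\|_{C_tV_x^{\omega,\kappa}}.$$
Summing over $n\in\mathbb{Z}^{\nu}$ and pulling the time-independent constant $\|u\|_{C_tV_x^{\omega,\kappa}}$ outside the sum produces exactly $Q\,\|u\|_{C_tV_x^{\omega,\kappa}}$ with $Q=\sum_{n}e^{-\varepsilon|n|}$. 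Finiteness of $Q$ for $\varepsilon>0$ then delivers both the continuous embedding and the quantitative bound simultaneously.

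I do not expect a serious obstacle here; the argument is essentially a weight comparison. The only points needing a moment's care are the continuity of the coefficients (so that $u$ is a legitimate element of the target space), handled above, and the role of the hypothesis $\kappa-\varepsilon-\gamma T>0$: this is used only to ensure that $X^{\kappa-\varepsilon,\gamma,T}$ is well-defined in the sense of Definition~\ref{kgammaTspaces}, while the weight comparison itself requires nothing beyond $\gamma|t|\geq 0$.
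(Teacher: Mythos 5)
Your argument is correct and is essentially the paper's own proof: both reduce the claim to the pointwise bound $|\widehat{u}(t,n)|\leq e^{-\kappa|n|}\|u\|_{C_tV_x^{\omega,\kappa}}$, drop the harmless factor $e^{-\gamma|t||n|}\leq 1$, and sum the resulting $e^{-\varepsilon|n|}$ over $\mathbb{Z}^{\nu}$. Your additional check that the coefficients $t\mapsto\widehat{u}(n,t)$ are continuous (so that $u$ genuinely belongs to $X^{\kappa-\varepsilon,\gamma,T}$ as defined) is a small point the paper leaves implicit, and is handled correctly.
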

  
  \begin{proof}
  For $$u(t,x) = \sum_{n \in \mathbb{Z}^{\nu}} \widehat{u}(t,n)e^{i \langle \omega, n \rangle x} \in C([-T,T],V^{\omega,\kappa}),$$ we have $|\widehat{u}(t,n)| \leq \|u\|_{C_t V^{\omega,\kappa}_x}e^{-\kappa|n|}$ uniformly in $n$ and $t$, and therefore \begin{equation*}\|u\|_{\kappa - \varepsilon, \gamma,T} \lesssim \|u\|_{C_tV_x^{\omega,\kappa}} \cdot \sum_{n \in \mathbb{Z}^{\nu}}e^{-\varepsilon |n|} \lesssim \|u\|_{C_tV_x^{\omega,\kappa}}. \qedhere \end{equation*} 
  \end{proof}

 \section{Proof of Theorems \ref{multipliertheoremreal} and \ref{multipliertheorem}}\label{Proof}
 We first present the proof of Theorem \ref{multipliertheoremreal}. Afterwards, we discuss the minor adjustments required to prove Theorem \ref{multipliertheorem} regarding \eqref{dNLS}.

 \begin{proof}[Proof of Theorem \ref{multipliertheorem}]
 	The following inequality establishes local Lipschitz continuity of the transformation \eqref{Duhameloperator}. Given any initial data $u_0 \in V^{\omega,k}$ we may combine the second linear estimate (Proposition \ref{linear1}) and the multilinear estimate (Proposition \ref{multilinear}) to bound 
\begin{equation}
 \begin{split}\| \Gamma_{u_0}(u)-\Gamma_{u_0}(v) \|_{k,\gamma,T} & = \Bigl\| \int_0^te^{-(t-\tau)L} \partial_x (u^{p+1}-v^{p+1})(s) ds \Bigr\|_{k,\gamma,T}  \\ 
 		& \lesssim_{|\omega|} \frac{1}{\gamma}\|u^{p+1} - v^{p+1} \|_{k,\gamma,T} \\ & \leq \frac{1}{\gamma} [\|u\|_{k,\gamma,T}+\|v\|_{k,\gamma,T}]^p \cdot  \|u-v\|_{k,\gamma,T}.
   \end{split}
 	\end{equation}
Arguing similarly, using Proposition~\ref{linear1} and Proposition~\ref{multilinear}, we get that
 	\begin{equation}
 	\|\Gamma_{u_0}u\|_{k,\gamma,T} \leq \|u_0\|_{\omega,k} + \frac{|\omega|}{\gamma}\|u\|_{k,\gamma,T}^{p+1}.	
 	\end{equation}

 	Given $R>0$, we can take $\gamma>0$ large enough and $T>0$ small enough so that for all initial data $\tilde{u}_0 \in V^{\omega,k}$ of size $\|u_0\|_{\omega,k}<R$, the mapping $\Gamma_{\tilde{u}_0}$ is a contraction on $B_{X^{k,\gamma,T}}(0,2R)$.
 Taking $T>0$ small enough so that $k -\gamma T > \kappa$ guarantees the inclusion $X^{k,\gamma,T} \subseteq C([-T,T],V^{\omega,\kappa})$. This gives existence of solutions with uniform time horizon $T$ for initial data in $B_{V^{\omega,k}}(0,R)$ by Banach's fixed point theorem applied to $\Gamma_{\tilde{u}_0}$. The embedding $X^{k,\gamma,T} \subseteq C([-T,T],V^{\omega,\kappa})$ ensures that this is in fact a solution in $C([-T,T],V^{\omega,\kappa})$. 
 	 
Next, we show unconditional uniqueness in the space $C([-T,T],V^{\omega,\kappa})$ for suitable $T>0$. 
We choose $\gamma>0$ large enough and $T>0$ small enough so that the following hold simultaneously:
\begin{enumerate}
    \item  For all initial data $\tilde{u}_0 \in V^{\omega,k}$ of size $\|u_0\|_{\omega,k}<R$ the associated mapping $\Gamma_{\tilde{u}_0}$ is a contraction on $B_{X^{k,\gamma,T}}(0,2R)$.
  	\item For all initial data $\tilde{u}_0 \in V^{\omega,k}$ of size $\|u_0\|_{\omega,k}<R$, the associated mapping $\Gamma_{\tilde{u}_0}$ is a contraction on both $B_{X^{\kappa-\varepsilon,\gamma,T}}(0,2RQ)$ and $B_{X^{\kappa-\varepsilon, \gamma, T}}(0,3RQ)$, where $Q=\sum_{n \in \mathbb{Z}^{\nu}} e^{-\varepsilon|n|}$ is as in Lemma~\ref{embedding}.
 \end{enumerate} 
Indeed, we may choose $$\gamma \gtrsim_{|\omega|} \max(R^p,(QR)^p) =(QR)^p,$$ and then take $T>0$ small enough such that $k-\varepsilon - \gamma T > \kappa$, thereby guaranteeing the embedding $C([-T,T],V^{\omega, \kappa}) \hookrightarrow X^{\kappa -\varepsilon, \gamma, T}$ and the norm decreasing inclusion $X^{k,\gamma,T} \subseteq C([-T,T],V^{\omega,\kappa})$. 

 Condition (1) guarantees existence of solutions with uniform time horizon $T>0$ for all initial data in $B_{V^{\omega,k}}(0,R)$ (as before). Condition (2) will allow us to get unconditional uniqueness via bootstrapping. From the embedding in Lemma \ref{embedding} we have that the solution we constructed lies in $$B_{X^{k,\gamma,T}}(0,2R) \subseteq B_{C([-T,T],V^{\omega,\kappa})}(0,2R) \subseteq B_{X^{\kappa - \varepsilon, \gamma,T}}(0,2RQ),$$ and is thus the unique fixed point of $\Gamma_{\tilde{u}_0}$ in $B_{X^{\kappa - \varepsilon, \gamma,T}}(0,2RQ)$. At first, this only gives uniqueness amongst functions in $$C([-T,T],V^{\omega,\kappa})\cap B_{X^{\kappa-\varepsilon, \gamma,T}}(0,2RQ).$$ By bootstrapping, we upgrade this to unconditional uniqueness in $C([-T,T],V^{\omega,\kappa})$: 
  Let $u_1, u_2 \in C([-T,T],V^{\omega,\kappa})$ be two solutions. Consider $$\Omega = \{t \in [-T,T]: \forall x \ u_1(t,x) = u_2(t,x)\}.$$
  $\Omega$ is closed by continuity and nonempty as $0 \in \Omega$. Note that $t \mapsto \|u_{1}\|_{\kappa -\varepsilon, \gamma, t}$ is increasing and continuous in $t$. If we had that both $\|u_{1}\|_{\kappa -\varepsilon, \gamma, t}$ and $\|u_{2}\|_{\kappa -\varepsilon, \gamma, t}$ stayed bounded by $2RQ$ for all $t \leq T$, then we could apply uniqueness in $B_{X^{\kappa -\varepsilon, \gamma, t}}(0,2RQ)$ to get $u_1 = u_2$. Otherwise, there exists a minimal $\tilde{t}$ such that $\|u_{1}\|_{\kappa -\varepsilon, \gamma, \tilde{t}}=2RQ$ or $\|u_{2}\|_{\kappa -\varepsilon, \gamma, \tilde{t}}=2RQ$. By continuity, we have that for some $\delta>0$ the norms $\|u_{1}\|_{\kappa - \varepsilon, \gamma, t}$ and $\|u_{2}\|_{\kappa - \varepsilon, \gamma, t}$ stay within $3RQ$ for $t<\tilde{t}+\delta$. By uniqueness in $B_{\kappa-\varepsilon, \gamma, \tilde{t}+\frac{\delta}{2}}(0,3RQ)$ we conclude that $(-\tilde{t}-\frac{\delta}{2},\tilde{t}+\frac{\delta}{2}) \subseteq \Omega$. So $\Omega$ is open. Hence, by connectedness $\Omega = [-T,T]$.

 Thus, the data-to-solution map $B_{V^{\omega,k}}(0,R) \rightarrow C([-T,T],V^{\omega,\kappa})$ is well-defined. Finally, we show that this data-to-solution map is Lipschitz continuous.  Let $\tilde{u},\tilde{\tilde{u}} \in B_{X^{k,\gamma,T}}(0,2R)$ be the unique solutions for initial data $\tilde{u}_0, \tilde{\tilde{u}}_0 \in B_{V^{\omega,k}}(0,R)$. Then $$\tilde{u} - \tilde{\tilde{u}}= \Gamma_{\tilde{u}_0}(\tilde{u}) - \Gamma_{\tilde{\tilde{u}}_0}(\tilde{\tilde{u}})= \Gamma_{0}(\tilde{u}) - \Gamma_{0}(\tilde{\tilde{u}}) + e^{tL}\tilde{\tilde{u}}_0 - e^{tL}\tilde{\tilde{u}}_0.$$ 
 The contraction property of $\Gamma_0$ on $B_{X^{k,\gamma,T}}(0,2R)$ with contracting factor $q$ say, together with the first linear estimate \eqref{firstlinear} allow us to conclude \begin{equation*}
 \|\tilde{u} - \tilde{\tilde{u}}\|_{k,\gamma,T} \leq \frac{1}{1-q}\|\tilde{u}_0 - \tilde{\tilde{u}}_0 \|_{\omega,k}. \qedhere	
 \end{equation*} 
 \end{proof}
 
 Proving Theorem \ref{multipliertheorem} only requires a minor modification of the argument. It suffices to note that the Banach algebra $X^{k,\gamma,T}$ is also invariant under complex conjugation. The multilinear estimate in Proposition~\ref{multilinear} then yields the following estimates
 \begin{equation}\|P(u,\bar{u})\|_{k,\gamma,T} \lesssim_P (1+\|u\|_{k,\gamma,T})^{\deg P}\end{equation}
 \begin{equation}\|P(u,\bar{u})-P(v,\bar{v})\|_{k,\gamma,T} \lesssim_P [\|u\|_{k,\gamma,T}+\|v\|_{k,\gamma,T}]^{(\deg P) -1}\|u-v\|_{k,\gamma,T}.\end{equation}
 Now the proof carries through as before. Note that for \eqref{dNLS} the multiplier operator $L=-i\partial_x^2$ has symbol $m(\xi)=i\xi^2$ satisfying the growth condition \eqref{growthcondition} so that Theorem \ref{multipliertheorem} is applicable. \\

 Next, we would like to address the relationship between our results and those of \cite{damanik2015existence, damanik2021local}. To this end, we discuss the difference between the spaces defined by condition \eqref{decay} and the spaces $V^{\omega,k}$. 
 Observe that $$\eqref{decay} \iff \widehat{u}(n)e^{k|n|} \in l^{\infty}_n(\mathbb{Z}^{\nu}),$$
  suggesting the introduction of the spaces
  $$W^{\omega,k} = \Big\{ u(x)=\sum_{n \in \mathbb{Z}^{\nu}}\widehat{u}(n)e^{i \langle \omega, n \rangle x} \ \Big| \ \| |\widehat{u}(n)|e^{k|n|}\|_{l^{\infty}_n(\mathbb{Z}^{\nu})} < \infty \Big\},$$ using a weighted $l^{\infty}$-norm on the Fourier side. In comparison, the spaces $V^{\omega,k}$ are defined using a weighted $l^{1}$-norm on the Fourier side, with the same weights:
  $$u \in V^{\omega,k} \iff \widehat{u}(n)e^{k|n|} \in l^{1}_n(\mathbb{Z}^{\nu}).$$
  Thus, $$V^{\omega,k} \subseteq W^{\omega,k}.$$  
  Moreover, we have that \begin{equation}\label{l1linftyspaces} W^{\omega,k} \subseteq V^{\omega, k'} \ \text{for} \ 0<k'<k,\end{equation} as $e^{(k'-k)|n|} \in l_n^1(\mathbb{Z}^{\nu})$. The advantage of working with the $l^1$-based space $V^{\omega,k}$ instead of the $l^{\infty}$-based $W^{\omega,k}$ stems from $l^1(\mathbb{Z}^{\nu})$ being a convolution algebra.

  These relations allow us to recover the existence and uniqueness statement from \cite{damanik2015existence, damanik2021local} as a corollary to Theorem \ref{multipliertheoremreal}. Indeed, Theorem \ref{multipliertheorem} applies to the \eqref{g-KdV} equation by taking the operator $L$ to be $L=\partial_x^3$; its symbol $m(\xi)=-i\xi^3$ satisfies the growth condition \eqref{growthcondition} and the symmetry condition \eqref{symmetrycondition}. 
  In view of \eqref{l1linftyspaces}, $\omega$-quasi-periodic initial data satisfying \eqref{decay} belongs to the space $W^{\omega,k} \subseteq V^{\omega,k'}$ for $\frac{k}{2} < k' < k$. Thus, by Theorem \ref{multipliertheoremreal}, there exists a solution in $C([0,T],V^{\omega,\frac{k}{2}}) \subseteq C([0,T],W^{\omega,\frac{k}{2}})$; note that the Fourier coefficients of a  function in this space satisfy the bound \eqref{decreaseddecay} uniformly in time. In order to recover the uniqueness statement, we note that a solution of the form \eqref{seriesexpansion} whose coefficients satisfy the bound $|\widehat{u}(t,n)| \lesssim e^{- \rho|n|}$ uniformly in time $t \in [0,T]$ for some $T>0$ and some $\rho>0$, belongs to the space $C([0,T],W^{\omega,\rho}) \subseteq C([0,T],V^{\omega,\kappa}$) for $\kappa<\rho$. Taking $\kappa < \min(\rho,k)$ in Theorem~\ref{multipliertheoremreal} yields the claim.

\section{Models without derivative nonlinearity}\label{noderivativemodels}

\subsection*{Nonlinear Schr\"odinger equation} 

We study the Cauchy problem for the nonlinear Schr\"odinger equation 
\begin{equation}\tag{NLS}\label{NLS}
	\partial_tu-i\partial_x^2u \pm i |u|^{2p}u = 0, \quad u(x,0)=u_0(x) 
\end{equation}
(here $p>0$ is an integer) via the associated Duhamel's formula 
\begin{equation}\label{DuhamelNLS}
u=e^{it\partial_x^2}u_0 \mp i\int_0^t e^{i(t-\tau)\partial_x^2}[(|u|^{2p}u)(\tau)] d\tau.
\end{equation}
A natural setting to work in is the Wiener algebra of $\omega$-quasi-periodic functions, as it is preserved under the linear flow given by the propagator $e^{it\partial_x^2}$ and the nonlinearity in \eqref{NLS} is locally Lipschitz. The Wiener algebra of $\omega$-quasi-periodic functions is
$$A^{\omega}:=\Big\{u=\sum_{n\in\mathbb{Z}}\widehat{u}(n)e^{i\langle \omega,n\rangle x} \ \Big| \ \|\widehat{u}\|_{l^1(\mathbb{Z}^\nu)}<\infty\Big\}$$
equipped with the norm $\|u\|_{A^{\omega}}:=\|\widehat{u}\|_{l^1(\mathbb{Z}^{\nu})}=\sum_{n \in \mathbb{Z}^\nu} |\widehat{u}(n)|.$
More generally, for an increasing weight function $\lambda:(0,\infty) \rightarrow (0,\infty)$ bounded away from zero and satisfying $\lambda(q+r) \leq \lambda(q) \cdot \lambda(r)$ we can define the Banach algebra $A^{\omega,\lambda}$ of all $\omega$-quasi-periodic functions for which the norm 
$$ \|u\|_{A^{\omega,\lambda}}:=\|\widehat{u}(n)\lambda(|n|)\|_{l^1(\mathbb{Z}^{\nu})}=\sum_{n \in \mathbb{Z}^{\nu}} |\widehat{u}(n)|\lambda(|n|) $$
is finite. $A^{\omega}$ corresponds to the constant function $\lambda \equiv 1$. The space $V^{\omega,k}$ corresponds to the weight function $\lambda(m)=e^{km}$. Another example of an admissible weight function is $\lambda(m)=2\langle m \rangle^2 := 2(1+m^2)$. We denote by $C([0,T],A^{\omega,\lambda})$ the Banach algebra of continuous function $u:[0,T] \rightarrow A^{\omega,\lambda}$ equipped with the norm $\|u\|_{C_tA^{\omega,\lambda}_x}:=\sup_{t\in[0,T]} \|u(t,\cdot)\|_{A^{\omega,\lambda}}$. We say that $u \in C([-T,T],A^{\omega,\lambda})$ is a solution to \eqref{NLS} if and only if equality holds in \eqref{DuhamelNLS} pointwise in space and time. If $|n|^2 \lesssim\lambda(|n|)$, then functions in $A^{\omega,\lambda}$ are twice continuously differentiable in the spatial variable. Under this assumption on $\lambda$, we have that for functions in $C([-T,T],A^{\omega,\lambda})$, being a classical solution to \eqref{NLS}\footnote{We say that $u\in C([-T,T],A^{\omega,\lambda})$ is a classical solution to NLS if and only if it is once continuously differentiable in time and satisfies \eqref{NLS} pointwise.} is equivalent to Duhamel's formula \eqref{DuhamelNLS}. This can be shown analogously to the proof of Proposition \ref{notionofsolution}. 

We use the the right-hand side of \eqref{DuhamelNLS} to define a mapping
$$\Gamma_{u_0}u =e^{it\partial_x^2}u_0 + \int_0^t e^{i(t-\tau)\partial_x^2}[(|u|^{2p}u)(\tau)] d\tau.$$

In the absence of a derivative in the nonlinearity, we get the following estimates for the norm $\|\cdot \|_{C_tA^{\omega,\lambda}_x}$ by using the fact that the propagator $e^{it\partial_x^2}$ acts by isometry on $A^{\omega,\lambda}$ and by utilizing the multilinear estimate from the algebra property:
\begin{equation}
	\|e^{it\partial_x^2}u_0\|_{C_tA_x^{\omega,\lambda}}=\|u_0\|_{A^{\omega,\lambda}}
\end{equation}
\begin{equation}
	\Big\|\int_0^t e^{i(t-\tau)\partial_x^2}[(|u|^{2p}u)(\tau)] d\tau \Big\|_{C_tA_x^{\omega,\lambda}} \leq T\|u\|_{C_tA_x^{\omega,\lambda}}^{2p+1}
\end{equation}
\begin{equation}
\|\Gamma_{u_0}u-\Gamma_{u_0}v\|_{C_tA^{\omega,\lambda}_x} \leq  T\cdot(\|u\|_{C_tA_x^{\omega,\lambda}}+\|v\|_{C_tA_x^{\omega,\lambda}})^{2p}\cdot\|u-v\|_{C_tA_x^{\omega,\lambda}}.	
\end{equation}

Equipped with these estimates, we obtain unconditional local well-posedness of \eqref{NLS} in $A^{\omega,\lambda}$ both forward and backward in time by standard contraction mapping and bootstrapping arguments analogous to the proof presented in Section~\ref{Proof}:

\begin{theorem}\label{NLStheorem}
    NLS is unconditionally locally well-posed in $A^{\omega,\lambda}$.
\end{theorem}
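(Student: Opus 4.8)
The plan is to prove Theorem~\ref{NLStheorem} by a contraction mapping argument on the space $C([-T,T],A^{\omega,\lambda})$, using the three estimates displayed just above the theorem statement. First I would fix the initial data $u_0 \in A^{\omega,\lambda}$ and set $R := \|u_0\|_{A^{\omega,\lambda}}$. The goal is to show that for $T>0$ sufficiently small, the map $\Gamma_{u_0}$ is a contraction on the closed ball $\overline{B}(0,2R) \subseteq C([-T,T],A^{\omega,\lambda})$. That $\Gamma_{u_0}$ maps this ball into itself follows from the first two estimates: for $u$ with $\|u\|_{C_tA^{\omega,\lambda}_x} \leq 2R$ we have $\|\Gamma_{u_0}u\|_{C_tA^{\omega,\lambda}_x} \leq R + T(2R)^{2p+1}$, which is at most $2R$ once $T \leq R^{-2p}2^{-(2p+1)}$. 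The contraction property follows from the third estimate: on the ball of radius $2R$ one has $\|\Gamma_{u_0}u - \Gamma_{u_0}v\|_{C_tA^{\omega,\lambda}_x} \leq T(4R)^{2p}\|u-v\|_{C_tA^{\omega,\lambda}_x}$, so shrinking $T$ further makes the Lipschitz constant less than $1$. Banach's fixed point theorem then yields a unique fixed point in $\overline{B}(0,2R)$, which is a solution of \eqref{DuhamelNLS}, hence (under the assumption $|n|^2 \lesssim \lambda(|n|)$) a classical solution of \eqref{NLS}.

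Next I would upgrade the uniqueness within the ball to \emph{unconditional} uniqueness in all of $C([-T,T],A^{\omega,\lambda})$, mirroring the bootstrap argument in Section~\ref{Proof}. Given two solutions $u_1, u_2 \in C([-T,T],A^{\omega,\lambda})$ with the same initial data, I would consider the set $\Omega = \{t : u_1(t,\cdot) = u_2(t,\cdot)\}$, which is closed and contains $0$. Using that $t \mapsto \sup_{|s|\leq t}\|u_i(s,\cdot)\|_{A^{\omega,\lambda}}$ is continuous and increasing, a continuation argument with the local contraction on balls of radius comparable to the current solution size shows $\Omega$ is open, whence $\Omega = [-T,T]$ by connectedness. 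Here the time horizon in the contraction depends only on the size of the solution on the relevant subinterval, which is what makes the bootstrap close.

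Finally, Lipschitz dependence on initial data follows exactly as at the end of the proof of Theorem~\ref{multipliertheorem}: writing $\Gamma_{u_0}u - \Gamma_{v_0}v = \Gamma_0 u - \Gamma_0 v + e^{it\partial_x^2}(u_0 - v_0)$ and using the contraction factor $q<1$ of $\Gamma_0$ together with the isometry of the propagator gives $\|u - v\|_{C_tA^{\omega,\lambda}_x} \leq (1-q)^{-1}\|u_0 - v_0\|_{A^{\omega,\lambda}}$ for the respective solutions. I expect the only genuinely delicate point to be the bootstrap step establishing unconditional uniqueness, since one must verify that the local time of existence is controlled purely by the norm of the solution and arrange the radii so that the continuation argument does not leave the ball on which the contraction holds; the in-ball existence, uniqueness, and continuity are all immediate consequences of the displayed estimates and are genuinely routine in this Wiener-algebra setting where no derivative loss occurs.
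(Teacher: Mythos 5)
Your proposal is correct and follows essentially the same route the paper intends: the paper itself only sketches this proof, citing the three displayed $C_tA^{\omega,\lambda}_x$ estimates and "standard contraction mapping and bootstrapping arguments analogous to the proof presented in Section~\ref{Proof}," and your write-up fills in exactly those details (contraction on a ball in $C([-T,T],A^{\omega,\lambda})$ directly, with no need for the weighted spaces $X^{k,\gamma,T}$ since there is no derivative loss, followed by the same connectedness bootstrap and the same Lipschitz-dependence computation). The only cosmetic adjustment is to take $R$ slightly larger than $\|u_0\|_{A^{\omega,\lambda}}$ so that the time of existence is uniform over an open ball of data around $u_0$, as the paper's notion of well-posedness requires.
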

In particular, we get unconditional local well-posedness in $V^{\omega,k} \subseteq A^{\omega}$. In comparison to Theorem \ref{multipliertheoremreal} and \ref{multipliertheorem}, we also have persistence of regularity; for initial data $u_0 \in V^{\omega,k}$ the solution $u(t)$ to \eqref{NLS} stays locally in $V^{\omega,k}$.

  \subsection*{Generalized Benjamin-Bona-Mahony equation}\label{gBBMsection}

  In the paper \cite{damanik2022quasiperiodic}, the generalized Benjamin-Bona-Mahony equation 
  \begin{equation}\tag{g-BBM}\label{gBBM} \partial_t u - \partial_{xxt} u + \partial_x u + \partial_x(u^{p+1}) = 0 \end{equation} was studied by the same method as in \cite{damanik2015existence,damanik2021local} --- via a combinatorial analysis of Picard iterates for the associated integral equation for the Fourier coefficients. Here, we provide a shorter proof of the existence and uniqueness result in \cite{damanik2022quasiperiodic}; as a biproduct of our arguments, we obtain a statement about continuous dependence of solutions on initial data.
  
  The \eqref{gBBM} equation can be considered as an ODE for a Banach space valued function:
    \begin{equation}\label{gBBMODE}
  \partial_t u = \frac{-\partial_x}{1-\partial_x^2}(u+u^
  {p+1}).	
  \end{equation}
  Here $\frac{-\partial_x}{1-\partial_x^2}$ is the Fourier multiplier operator with symbol $\frac{-i\xi}{1+\xi^2}$. The symbol stays bounded in absolute value, so that $\frac{-\partial_x}{1-\partial_x^2}$ defines a bounded linear operator on $V^{\omega,k}_{\mathbb{R}}$ with norm $$\Big\|\frac{-\partial_x}{1-\partial_x^2}\Big\|_{\mathcal{B}(V_{\mathbb{R}}^{\omega,k})}=\sup_{n \in \mathbb{Z}^{\nu}} \Big| \frac{-i\langle \omega,n\rangle}{1+\langle \omega, n \rangle^2} \Big| \leq \frac{1}{2}.$$
   The treatment of the nonlinearity $\frac{-\partial_x}{1-\partial_x^2}u^{p+1}$ does not involve the complexities associated with the presence of an actual derivative --- an actual derivative would manifest on the Fourier side as linear growth of the multiplier symbol. Boundedness of $\frac{-\partial_x}{1-\partial_{x}^2}$ combined with the Banach algebra property gives that $$F:V^{\omega,k}_{\mathbb{R}} \rightarrow V^{\omega,k}_{\mathbb{R}}, u \mapsto \frac{-\partial_x}{1-\partial_x^2}(u+u^{p+1})$$
  is locally Lipschitz continuous via $$\|F(u)-F(v)\|_{\omega,k} \leq \frac{1}{2}[1+(\|u\|_{\omega,k}+\|v\|_{\omega,k})^p]\|u-v\|_{\omega,k}.$$
  By the Picard-Lindel\"of Theorem, we obtain the unconditional local well-posedness of \eqref{gBBM} in $V^{\omega,k}_{\mathbb{R}}$ both forward and backward in time.
  
\printbibliography 
\end{document}